\newcommand{\infig}[3]{
\begin{figure}
\centering
\includegraphics[width=#1in]{#2}
\caption{#3\label{fig:#2}}
\end{figure}}
\newtheorem{theorem}{Theorem}
\newtheorem{lemma}[theorem]{Lemma}
\newtheorem{proposition}[theorem]{Proposition}
\newtheorem{conjecture}[theorem]{Conjecture}
\newenvironment{proof} {\noindent{\it Proof. }} {{\qed}}
\newenvironment{proofof}[1]{\noindent{\it Proof of #1. }} {{\qed}}
\newtheorem{thm}[theorem]{Theorem}
\newtheorem{lem}[theorem]{Lemma}
\newtheorem{cor}[theorem]{Corollary}
\newtheorem{dfn}[theorem]{Definition}
\def\qed{\hspace*{\fill} $\Box$\par\medskip}
\def\mod{\mathop{\rm mod}\nolimits}
\def\<{\left<}
\def\>{\right>}
\def\Pr{\mathrm{Pr}}
\def\e{\mathrm{e}}
\title{Many Random Walks Are Faster Than One}
\author{Noga Alon\thanks{Email: nogaa@tau.ac.il} \\ Tel Aviv University
\and Chen Avin\thanks{Email: avin@cse.bgu.ac.il}  \\  Ben-Gurion University 
\and Michal Kouck\'y\thanks{Email: koucky@math.cas.cz, work is partially supported by grant GA \v{C}R 201/07/P276 and 201/05/0124.} \\ Academy of Sciences of Czech Republic
\and Gady Kozma\thanks{Email: gady.kozma@weizmann.ac.il} \\ Weizmann Institute of Science
\and Zvi Lotker\thanks{Email: zvilo@cse.bgu.ac.il} \\  Ben-Gurion University 
\and Mark R. Tuttle\thanks{Email: tuttle@acm.org} \\ Intel}
\date{}
\begin{document}
\maketitle
\thispagestyle{empty}

\begin{abstract}
We pose a new and intriguing 
question motivated by distributed computing regarding random walks on graphs:
How long does it take for several independent random walks, starting from the same vertex, to cover
an entire graph?
We study the {\em cover time} -
the expected time required to visit every node in a graph at least once - and we show that for a large collection of interesting graphs, running many
random walks in parallel yields a speed-up in the cover time that is
linear in the number of parallel walks.
We demonstrate that an exponential speed-up is sometimes possible,
but that some natural graphs allow only a logarithmic speed-up.
A problem related to ours (in which the walks start from some probablistic distribution on vertices) was previously studied in the context of
space efficient algorithms for undirected $s$-$t$-connectivity 
and our results yield, in certain cases, an improvement upon some of the earlier bounds.

%
%
%
\end{abstract}
\newpage
\pagenumbering{arabic}

\section{Introduction}


Consider the problem of hunting or tracking on a graph.
The prey begins at one node, the hunters begin at other nodes,
and in every step each player can traverse an edge of the graph.
The goal is for the hunters to locate and track the prey as quickly as
possible.
What is the best algorithm for the hunters to explore the graph
and find the prey?
The answer depends on many factors,
such as the nature of the graph,
whether the graph can change dynamically,
how much is known about the graph,
and how well the hunters can communicate and coordinate their actions.
Graph exploration problems such as this are particularly interesting in changing or unknown environments.
 In such environments,
randomized algorithms are at an advantage, since they typically require
no knowledge of the graph topology.

Random walks are a natural and thoroughly studied approach to
randomized graph exploration.
A \emph{simple random walk} is a stochastic process that starts at one node of
a graph,
and at each step moves from the current node to an adjacent node chosen
randomly and uniformly from the neighbors of the current node.
A natural example of a random walk in a communication network arises when
messages are sent at random from device to device.
Since such algorithms exhibit locality, simplicity, low-overhead,
and robustness to changes in the graph structure,
applications based on random walks are becoming more and more popular.
In recent years,
random walks have been proposed in the context of querying,
searching, routing, and self-stabilization in wireless ad-hoc networks,
peer-to-peer networks, and other distributed systems and applications
\cite{dolev02random,
sergio02constrained,braginsky02rumor,sadagoan03active,avin04efficient,gkants04peer,alanyali06a-random-walk,bar-yossef06rawms}.

The problem with random walks, however, is latency.
In the case of a ring, for example,
a random walk requires an expected $\Theta(n^2)$ steps to traverse a ring,
whereas a simple traversal requires only $n$ steps.
The time required by a random walk to traverse a graph, i.e., the time to {\em cover} the graph, is an important
measure of the efficiency of random walks:
The \emph{cover time} of a graph is the expected time taken by a random walk
to visit every node of the graph at least once \cite{aleliunas79random,aldos83on}.
The cover time is relevant to a wide range of algorithmic applications
\cite{gkants04peer,wagner98robotic,jerrum97markov,avin04efficient},
and methods of bounding the cover time of graphs have been thoroughly
investigated
\cite{matthews88coveringspheres,aldous89lower,chandra89electrical,broder89cover,zuckerman90lower, lovasz93survey}.
Several bounds on the cover time of particular classes of graphs have been
obtained, with many positive results
\cite{chandra89electrical,broder89cover,jonasson98on,jonasson00planar,cooper03cover}.


\smallskip
The contribution of this paper is proposing and
partially answering the following question: Can multiple random
walks search a graph faster than a single random walk?
What is the cover time for a graph if we choose a node
in  the graph and run $k$ random walks simultaneously from that node,
where now the cover time is the expected time until each node has been visited
at least once by at least one random walk?

The answer is far from obvious.
Consider, for example, running $k$ random walks simultaneously on a ring.
If we start all $k$ random walks at the same node,
then the random walks have little choice but to follow each other
around the ring,
and it is simply a race to see which of them completes the trip first.
We prove in Section~\ref{s-line} that on a ring the cover time for $k$
random walks is only a factor of $\Theta(\log k)$ faster than the cover time for
a single random walk.
On the other hand, there are graphs for which $k$ random walks can
yield a surprising speed-up.
Consider a ``barbell'' consisting of two cliques of size $n$
joined by a simple path (see Figure~\ref{fig:barball} in Section~\ref{s-barbell}).
The cover time of such a graph is $\Theta(n^2)$ and its maximum is obtained  when
starting the walk from the central point of the path.
In this graph, the bells on each end of the barbell
act as a sink from which it is difficult for a single walk to escape,
but if a logarithmic number of random walks start at the center of the barbell,
each bell is likely to attract at least one random walk, which
will cover that part of the graph.
We prove in Section~\ref{s-barbell} that if we run $k = O(\log n)$ random
walks in parallel, starting from the center,
then the cover time decreases by a factor of $n$ from $\Theta(n^2)$ to $O(n)$,
which corresponds to a speed-up exponential in $k$.

The main result of this paper---summarized in Table~\ref{results table}---is
that, in spite of these examples,
a linear speed-up is possible for almost all interesting graphs
as long as $k$ is not too big. In Section~\ref{s-kspeed}, we prove
that if there is a large gap between the cover time and the hitting
time of a graph, where hitting time is the expected time for a
random walk to move from $u$ to $v$ for any two nodes $u$ and $v$ in
the graph, then $k$ random walks cover the graph $k$ times faster
than a single random walk
for $k$ sufficiently small (see theorems \ref{s1-thit} and \ref{c-kspeed2}).
Graphs that
fall into this class include complete graphs, expanders,
$d$-dimensional grids and hypercubes, $d$-regular balanced trees,
and several types of random graphs.
In the important special case of expanders,
we can actually prove a linear speed-up for $k \leq n$
and not just $k \leq \log n$.
While we demonstrate a relationship between the cover time and the hitting
time,
we also demonstrate a relationship between the cover time and the
mixing time (see Theorem~\ref{mixing theorem}),
which leads us to wonder whether there is some other property of a graph that
characterizes the speed-up achieved by multiple random walks more crisply
than hitting and mixing times.


%
%
%
%

\begin{table}
\caption{Results summary (for any constant $\epsilon > 0$)}
\small
\label{results table}
\centering
\begin{tabular}{|l|c|c|c|c|c|}
\hline
Graph family name  & $\text{Cover time} $ & $\text{Hitting time} $ & $\text{Mixing time}$ &\multicolumn{2}{c|}{Speed up $S_k$ (order)} \\
                                & $C $ & $h_{max}$ & $t_m$ & lower bound & upper bound  \\ \hline \hline
cycle& $n^2/2$ & $n^2/2$ & $O(n^2)$ & $\log(k)$ & $\log(k)$ \\ \hline
2-dimensional grid & $\Theta(n\log^2 n)$& $\Theta(n\log n)$ &$\Theta (n)$ & $k$,  $k<O(\log^{1-\epsilon} n)$ & \\ \hline
d-dimensional grid, $d>2$ & $\Theta(n \log n)$ & $ \Theta(n)$ & $\Theta(n^{2/d})$ & $k$,  $k<O(\log^{1-\epsilon} n)$  &  \\ \hline
hypercube & $\Theta(n \log n )$ &$\Theta (n)$ & $\log n \log\log n$ & $k$,  $k<O(\log^{1-\epsilon} n)$  & \\ \hline
complete graph &$\Theta(n \log n) $& $\Theta (n)$ & $1$ & $k$, $k<n$ &$k$, $k<n$  \\\hline
expanders & $\Theta(n \log n )$ & $\Theta (n)$ & $\log n$ & $\Omega(k)$,  $k<n$  &  \\ \hline
E-R Random graph & $\Theta(n \log n)$ & $\Theta (n)$ & $\log n$ & $k$,  $k<O(\log^{1-\epsilon} n)$  & \\\hline
\end{tabular}
\label{tbl:results}
\end{table}%

\subsection{Related work}

A related problem was previously studied in the context of algorithms for solving undirected $s$-$t$ connectivity,
the problem of deciding whether two given vertices $s$ and $t$ are connected in an undirected graph.
The key step in many of these algorithms is to identify large subsets of connected vertices and to shrink the graph accordingly. 
The algorithms use short random or pseudorandom walks to identify such subsets. These walks are
either starting from all the vertices of $G$ or from a suitably chosen sample of its vertices. Deterministic algorithms
concerned with the amount of used space \cite{NSW,ATWZ} use pseudorandom walks started from all the vertices of $G$. Parallel randomized
algorithms, e.g., \cite{KNP, HZ}, use short random walks from each vertex of $G$. Although there seems to be a deeper connection to our problem, these techniques
do not seem to provide any results directly related to our question of interest.

However, a problem closer to ours is considered in a sequence of papers on time-space trade-offs for solving $s$-$t$-connectivity \cite{BKRU, barnes-feige, feige}. Algorithms in this area choose first a random set of
representatives and then perform short random walks to discover connectivity between the representatives. 
A part of the analysis in \cite{BKRU} by Broder \emph{et al.} is calculating the expected number of steps needed to cover the whole graph. 
Indeed, Broder \emph{et al.} state as one of their main results that the
expected number of steps taken by $k$ random walks starting  from $k$ vertices chosen according to the stationary
distribution to cover the whole graph is $O(\frac{m^2\log^3 n}{k^2})$, where $m$ is the number of edges and $n$ is
the number of vertices of the graph \cite{BKRU}. Barnes and Feige in \cite{barnes-feige, feige} consider different starting distributions that give a
better time-space trade-off for the $s$-$t$-connectivity algorithm but they do not state any explicit bound on the cover time by $k$ random walks.
In contrast, in this work, we formulate our interest in comparison between the expected cover time of a single walk and of $k$ random walks.

Although our work focuses on covering the graph starting from a single vertex, under certain conditions 
our results
yield improved bounds on the cover time starting from the stationary distribution. In particular, for graphs with fast mixing time, Lemma~\ref{l91} yields
the bound $O((n \log n)/k)$ on the cover time of $k$ random walks starting from the stationary distribution on an expander and
the proof of Theorem \ref{mixing theorem} gives bound $O((n t_m \log^2 n )/ k)$ on the cover time of $k$  random walks starting
from the stationary distribution on graphs with mixing time $t_m$. Indeed, our proofs in Section \ref{s-kspeed} do not depend on the starting
distribution so similar results can be stated for k walks starting from an arbitrary probabilistic distribution. 




\section{Preliminaries}
Let us begin with a quick review of asymptotic notation, like $o(1)$,
as used in this paper:
$f(n) = O(g(n))$ if there exist positive numbers $c$ and $N$,
such that $f(n) \le c g(n), \forall n\ge N$.
$f(n) = \Omega(g(n))$ if there exist positive numbers $c$ and $N$,
such that $f(n) \ge c g(n), \forall  n \ge N$.
$f(n) = \Theta(g(n))$ if $f(n)=O(g(n))$ and $f(n)=\Omega(g(n))$.
$f(n) = o(g(n))$ if $\lim_{n \to \infty} f(n)/g(n) = 0$ and
$f(n) = \omega(g(n))$ if $\lim_{n \to \infty} f(n)/g(n) = \infty$.

Let $G(V, E)$ be an undirected graph, with $V$ the set of nodes and $E$ the set
of edges.
Let $n = \lvert V \rvert$ and $m = \lvert E \rvert$.
For $v \in V$,
let $N(v) = \{u \in V \mid (v,u) \in E\}$ be the set of neighbors of $v$,
and let $\delta(v) = \lvert N(v) \rvert$ be the degree of $v$.
A $\delta$-regular graph is a graph in which every node has degree $\delta$.

Let $X_i = \{X_i(t) : t \ge 0\}$ be a \emph{simple random walk} starting from node $i$ on the state space $V$ with \emph{transition matrix} $Q$.
When the walk is at node $v$, the probability to move in the next step to $u$ is $Q_{vu}= \Pr(v,u) = \frac{1}{\delta(v)}$ for $(v,u) \in E$ and $0$ otherwise.

Let $\tau_i(G)$ of a graph $G$ be the time taken
by a simple random walk starting at $i$ to visit all nodes in $G$. Formally $\tau_i = \min \{t : \{X_i(1), \dots, X_i(t)\}=V\}$ and clearly this is a stopping time and therefore a random variable.
Let $C_i = E[\tau_i]$ be the expected number of steps for the
simple random walk starting at $i$ to visit all the nodes in $G$.
The {\em cover time} $C(G)$ of a graph $G$ is defined formally as $C(G) = \max_i C_i$.
The {\em cover time} of graphs and methods of bounding it have been extensively
investigated \cite{matthews88coveringspheres, aldous89lower,
chandra89electrical, broder89cover, zuckerman90lower,
aleliunas79random}, although much less is known about the variance of the cover time.  Results for the cover time of specific graphs
vary from the {\em optimal cover time} of $\Theta(n \log n)$
associated with the complete graph $K_n$ to the worst case of
$\Theta(n^3)$ associated with the lollipop graph \cite{feige95upper,
feige95lower}.

The {\em hitting time}, $h(u, v)$, is the expected time for a random walk starting at $u$ to arrive to $v$ for the
first time. 
Let $h_{\max}$  be the maximum $h(u,v)$ over all ordered pairs of nodes and let $h_{\min}$ to be defined similarly. The following theorem
provides fundamental bound on the cover time $C(G)$ in terms of $h_{\max}$ and $h_{min}$.
\begin{thm}[Matthews' Theorem \cite{matthews88coveringspheres}]\label{thm:matthew} For any  graph G,
\begin{displaymath} h_{min}\cdot H_n \:\: \le \:\: C(G) \:\: \le \:\: h_{\max} \cdot H_n
\end{displaymath}
\noindent where $H_k = \ln(k) + \Theta(1)$ is the k-th harmonic number.
\end{thm}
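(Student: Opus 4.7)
The plan is to prove Matthews' classical bound by coupling the random walk with an independent uniformly random ordering of the vertices, so that the maximum of the $n$ correlated hitting times can be decomposed into a telescoping sum with controllable increments.

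First I would fix an arbitrary starting vertex $s$ and sample, independently of the walk $X_s$, a uniformly random permutation $v_1, v_2, \ldots, v_n$ of $V$. Let $\tau_j$ be the first time the walk visits $v_j$, and define the partial cover times $C_k = \max_{j \le k} \tau_j$, with the convention $C_0 = 0$. By construction $C_n = \tau_s$, the cover time from $s$, and we have the telescoping identity
\begin{equation*}
\tau_s \;=\; \sum_{k=1}^{n} (C_k - C_{k-1}).
\end{equation*}

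The key observation is that $C_k - C_{k-1}$ is positive exactly when $v_k$ is the last of $v_1,\ldots,v_k$ to be visited by the walk; call this event $A_k$. Since the permutation is uniform and independent of the walk, conditional on the walk the identity of the last-visited among any $k$ chosen positions is uniform over those $k$ positions, so $\Pr(A_k) = 1/k$. On the event $A_k$, at time $C_{k-1}$ the walk stands at some vertex $w \neq v_k$ (because $v_k$ has not been visited yet), and by the strong Markov property the additional time to reach $v_k$ has expectation exactly $h(w, v_k)$. Hence
\begin{equation*}
E[C_k - C_{k-1}] \;=\; \Pr(A_k)\cdot E[\,h(w,v_k)\mid A_k\,],
\end{equation*}
which lies between $h_{\min}/k$ and $h_{\max}/k$. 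Summing over $k$ yields $h_{\min} H_n \le E[\tau_s] \le h_{\max} H_n$. Taking the maximum over $s$ in the upper bound and any single $s$ in the lower bound gives the theorem.

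The main subtlety — and what I would be most careful about — is justifying $\Pr(A_k) = 1/k$ and the conditional expectation in step~6 rigorously. The point is that the random walk trajectory is independent of $\pi$, so after revealing the entire trajectory (in particular the times at which each vertex is first visited), the conditional distribution of the permutation is still uniform; then among the $k$ indices $\{v_1,\ldots,v_k\}$, each is equally likely to be the one with the largest first-visit time. A small point to double-check in the lower bound is that $w \neq v_k$ on $A_k$, so that the inequality $h(w,v_k) \ge h_{\min}$ (where $h_{\min}$ is taken over distinct ordered pairs) applies; this holds because $w = X_{C_{k-1}}$ has already been visited while $v_k$ has not. Everything else is routine manipulation of the harmonic sum.
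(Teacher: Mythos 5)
The paper does not prove this theorem: it is quoted from Matthews' 1988 paper as a known black-box result, so there is no in-paper proof to compare against. Your argument is the standard random-permutation proof of Matthews' bound (telescoping the cover time over a uniformly random ordering of the vertices, with $\Pr(A_k)=1/k$ and the strong Markov property at the randomized stopping time $C_{k-1}$), and it is essentially correct; in particular you correctly identify and handle the two genuine subtleties, namely that $A_k$ is measurable given the permutation and the walk up to time $C_{k-1}$, and that $X_{C_{k-1}}\neq v_k$ on $A_k$ for $k\ge 2$. The one place your argument is slightly off is the $k=1$ term of the lower bound: since the permutation ranges over all of $V$ including the start $s$, with probability $1/n$ we have $v_1=s$, so $C_1-C_0=\tau_{v_1}=0$ and the bound $E[C_1-C_0]\ge h_{\min}$ fails; one only gets $E[C_1-C_0]\ge \frac{n-1}{n}h_{\min}$, and summing gives $C(G)\ge h_{\min}H_{n-1}$ rather than $h_{\min}H_n$ (indeed the complete graph shows $h_{\min}H_n$ is not an exact lower bound). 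This is immaterial for the theorem as stated, since $H_k=\ln k+\Theta(1)$ makes the claim asymptotic, but the clean fix is to run the permutation over $V\setminus\{s\}$, which yields $h_{\min}H_{n-1}\le C_s\le h_{\max}H_{n-1}\le h_{\max}H_n$.
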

Notice that this bound  is not always tight, since in the line, for example, we have  $C(G) = h_{\max}$.

For an integer $t>0$, a graph $G$ and its vertices $u$ and $v$, let $p^t_{u,v}$ be the probability that a simple random
walk starting from vertex $u$ is at vertex $v$ at time $t$ and let $\pi(v)$ denote the probability of being at $v$ under the stationary distribution of $G$.
By {\em mixing time $t_m$ of $G$}, we understand the smallest integer $t>0$ such that for all vertices $u$ in $G$, $\sum_v |p^t_{u,v}-\pi(v)| < 1/\e$.

%

\subsection{$k$-Random Walks: Cover Time and Speed-up}
Let us turn our attention to the case of $k$ parallel independent random walks. We assume that all walks start from the same node and
we are interested in the performance of  such a system.
The natural extension to the definition of cover time is the  $k$ cover time:
Let $\tau_i^k$ be the random time taken
by $k$ simple random walks, all starting at $i$ at $t=0$, to visit all nodes in $G$ (i.e., the time by which each node has been visited by at least one of the walks).
Let  $C^k_i = E[\tau_i^k]$ be the expected cover time for $k$ walks starting from $i$.
For a graph $G$, let $C^k(G) = \max_i C^k_i(G)$ be the $k$-walks' cover time. In practice, we would like to bound the speed-up in the expected cover time achieved by $k$ walks:
\begin{dfn}
For a graph $G$ and an integer $k>1$, the {\em speed-up}, $S^{k}(G)$, on $G$, is the ratio between
the cover time of a single random walk and the cover time of $k$ random walks, namely, $S^{k}(G) = \frac{C(G)}{C^{k}(G)}$.
\end{dfn}
Note that speed-up on a graph is a function of $k$ and of the graph. When $k$ and/or graph is understood from the context we may not mention them explicitly.

\section{Statement of our results}

We show that $k$ random walks can cover a graph $k$ times faster than 
a single random walk on a large class of graphs, 
a class that includes many important and practical instances.
We begin with a simple statement of linear speed-up on simple graphs, 
but as we broaden the class of graphs considered,
our statements of speed-up become more involved.
We begin with a linear speed-up on cliques and expanders:

\begin{theorem}
For $k\le n$ and for a graph $G$ that is either a complete graph on $n$ vertices or an expander the speed-up is $S^k(G) = \Omega(k)$.
\end{theorem}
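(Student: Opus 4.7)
The plan is to derive both bounds from one uniform argument based on hitting-time tail estimates, rather than treating the complete graph and expanders separately. The starting observation is that both families share the key property $h_{\max} = \Theta(n)$: for $K_n$ one has $h(u,v)=n-1$ directly, while for a regular expander the spectral gap implies $h(u,v) = O(n)$ by standard arguments. Combined with Matthews' theorem (Theorem~\ref{thm:matthew}), the single-walk cover time in both cases is $\Theta(n \log n)$, so it suffices to show $C^k(G) = O((n\log n)/k)$ for $k \le n$.

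First I would establish an exponential tail for the single-walk hitting time: for any starting vertex $u$ and target $v$, $E[\tau_{u,v}] \le h_{\max}$, so Markov gives $\Pr[\tau_{u,v} > 2h_{\max}] \le \tfrac{1}{2}$. Applying the strong Markov property at time $2 h_{\max}$ from whatever vertex the walk occupies, and iterating, yields $\Pr[\tau_{u,v} > 2 j \, h_{\max}] \le 2^{-j}$ for every integer $j \ge 1$. Hence for any $T \ge 0$,
\[
\Pr[\tau_{u,v} > T] \;\le\; 2^{-\lfloor T/(2h_{\max})\rfloor}.
\]
Because the $k$ walks from $u$ are independent, the probability that none of them hits $v$ within $T$ steps is at most $2^{-k\lfloor T/(2h_{\max})\rfloor}$.

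Next I would union-bound this over the $n$ choices of target vertex $v$ to obtain
\[
\Pr[\tau_u^k > T] \;\le\; n \cdot 2^{-k\lfloor T/(2h_{\max})\rfloor}.
\]
Choosing $T = T_0 := (2 h_{\max}/k) \cdot \lceil \log_2(2n) \rceil$ makes this at most $\tfrac{1}{2}$. To convert the high-probability statement into an expectation bound I would apply the standard restart trick: if after $T_0$ steps some vertex is still uncovered, the walks' current configuration is an arbitrary starting state, and the same argument shows that with probability at least $\tfrac{1}{2}$ the remaining vertices are hit in the next $T_0$ steps; iterating gives $E[\tau_u^k] \le 2T_0 = O((h_{\max} \log n)/k)$. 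Plugging in $h_{\max} = \Theta(n)$ for the two families yields $C^k(G) = O((n\log n)/k)$, and dividing into $C(G) = \Theta(n\log n)$ gives $S^k(G) = \Omega(k)$ for all $k \le n$.

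The only delicate point is the geometric tail for $\tau_{u,v}$: one must be careful to reapply Markov from whatever state the walk occupies after $2h_{\max}$ steps, using $h_{\max}$ as a uniform upper bound over starting states rather than the specific $h(u,v)$. Once that is in place the rest is a clean union bound plus restart argument, and the computation is identical for $K_n$ and for expanders, since both steps hinge only on $h_{\max} = \Theta(n)$.
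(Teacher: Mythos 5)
There is a genuine gap, and it sits exactly at the point that makes this theorem harder than the general $k\le\log n$ result (Theorem~\ref{s1-thit}). Your tail bound $\Pr[\tau_{u,v}>T]\le 2^{-\lfloor T/(2h_{\max})\rfloor}$ is obtained by cutting the walk into blocks of length $2h_{\max}=\Theta(n)$ and applying Markov once per block. After the union bound you choose $T_0=(2h_{\max}/k)\lceil\log_2(2n)\rceil$, but then $T_0/(2h_{\max})=\lceil\log_2(2n)\rceil/k$, and as soon as $k>\log_2(2n)$ this quantity is below $1$, so $\lfloor T_0/(2h_{\max})\rfloor=0$ and your bound reads $\Pr[\tau_u^k>T_0]\le n\cdot 2^{0}=n$, which is vacuous. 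Equivalently: for $k$ close to $n$ each walk has a time budget of only $O(\log n)$ steps, far shorter than a single block of length $2h_{\max}=\Theta(n)$, so the Markov-iteration argument says nothing about whether such a short walk hits $v$. Your argument is therefore correct only for $k=O(\log n)$, which is the regime already covered by the paper's Baby Matthews Theorem (Theorem~\ref{thm:generalMatthew}); it does not prove the claim for $\log n\ll k\le n$.

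What is missing is a hitting estimate at a much finer time scale: one needs that a walk of length comparable to the \emph{mixing} time (not the hitting time) already visits a fixed vertex $v$ with probability $\Omega(t/n)$. For $K_n$ this is immediate (each step hits $v$ with probability $1/(n-1)$, which is how the paper's coupon-collector argument in Lemma~\ref{lem:clique} works), but for expanders it is not a consequence of $h_{\max}=O(n)$ plus Markov. The paper's Lemma~\ref{l91} supplies exactly this: using the spectral gap it shows that the expected number of visits to $v$ in a window of length $2s$, $s=\log(2n)/\log(d/\lambda)$, is at least $s/2n$, and then a second-moment (Cauchy--Schwarz/Paley--Zygmund) argument converts this into $\Pr[Y>0]\ge s/(2n+4s+4bn)$ --- the second moment step is essential because visits to $v$ cluster, so $E[Y]>0$ alone does not give a positive hitting probability of the right order. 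With blocks of length $2s=\Theta(\log n)$ rather than $\Theta(n)$, the budget $t=O(n\log n/k)$ accommodates at least one block for all $k$ up to $\Theta(n)$, and the rest of your outline (independence across walks and blocks, union bound over $v$, restart trick for the expectation) then goes through as in Corollary~\ref{c92}. So the architecture of your proof is fine, but the single uniform ingredient you chose --- the geometric tail at scale $h_{\max}$ --- is too coarse to reach $k\le n$.
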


\noindent
We can show a linear speed-up on other graphs, as well, but to do so we must bound $k$, the number of random walks.
Which bound we use depends on Matthews' bound.

When Matthews' bound is tight, we can prove a linear speed-up for $k$ as large as $k \leq \log n$.
Our proof depends on a generalization of Matthews' bound for multiple random walks:
$C^k(G) \le {\e + o(1) \over k} \cdot h_{\max} \cdot H_n$ (see Theorem \ref{thm:generalMatthew}).
Since Matthews' bound is known to be tight for the complete graph,
expanders \cite{chandra89electrical},
$d$-dimensional grids for $d \ge 2$ \cite{chandra89electrical},
$d$-regular balanced trees for $d \ge 2$ \cite{Zuckerman:1989lr},
Erd\H{o}s-R\'enyi random graphs \cite{cooper03cover},
and random geometric graphs \cite{avin05on-the-cover} (in the last two cases,
 for choice of parameters that guarantee connectivity 
with high probability), 
the following result shows that $k \leq \log n$ random walks yield a linear 
speed-up for a large class of interesting and useful graphs:

\begin{thm}\label{s1-thit}
If $C(G) = \Theta(h_{\max} \log n)$,
then $S^k(G) = \Omega(k)$ for all $k \leq \log n$.
\end{thm}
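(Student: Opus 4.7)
The plan is to derive this bound as a direct corollary of the generalized Matthews' bound announced in the preceding paragraph, namely Theorem \ref{thm:generalMatthew}, which gives $C^k(G) \le \frac{\e + o(1)}{k} \cdot h_{\max} \cdot H_n$. Since $H_n = \ln n + \Theta(1)$ and the hypothesis asserts $C(G) = \Theta(h_{\max} \log n)$, the ratio of these two quantities should immediately produce the desired $\Omega(k)$ speed-up.

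More concretely, I would proceed as follows. The hypothesis supplies a constant $c_2 > 0$ such that $C(G) \ge c_2 \, h_{\max} \log n$ for all sufficiently large $n$. Theorem \ref{thm:generalMatthew} supplies a constant $c_1 > 0$ with $C^k(G) \le c_1 \, h_{\max} \log n / k$, provided the $o(1)$ factor there is genuinely $o(1)$; the assumption $k \le \log n$ is precisely what guarantees this (it keeps $k$ small enough relative to $n$ that the error term in the generalized Matthews' bound is absorbed into the constant). Taking the quotient then gives
\[
S^k(G) \;=\; \frac{C(G)}{C^k(G)} \;\ge\; \frac{c_2 \, h_{\max} \log n}{c_1 \, h_{\max} \log n / k} \;=\; \frac{c_2}{c_1} \cdot k \;=\; \Omega(k).
\]

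The deduction above is essentially arithmetic, so the real substance is hidden in Theorem \ref{thm:generalMatthew}. The hard part is not the present statement but that generalized Matthews' bound itself: one must show that in the standard Matthews argument, where a random permutation of the vertices produces the $\ln n$ harmonic sum from conditional hitting times, each such conditional hitting time for $k$ parallel walks contributes only $O(h_{\max}/k)$ rather than $O(h_{\max})$. This is where the factor $1/k$ genuinely enters and where the restriction $k \le \log n$ becomes unavoidable (the $k$ walks must remain sufficiently uncorrelated relative to the target vertex). For the present theorem, however, once Theorem \ref{thm:generalMatthew} is in hand, no further probabilistic work is needed and the statement follows from the arithmetic above.
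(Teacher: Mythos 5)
Your derivation is correct and is exactly what the paper does: Theorem~\ref{s1-thit} is presented as an immediate corollary of the Baby Matthew Theorem (Theorem~\ref{thm:generalMatthew}), obtained by dividing the hypothesis $C(G) = \Omega(h_{\max} \log n)$ by the bound $C^k(G) \le \frac{\e + o(1)}{k} h_{\max} H_n$. One tangential remark: the paper's proof of Theorem~\ref{thm:generalMatthew} is not the permutation-based Matthews argument you sketch at the end, but a union bound over all $n$ target vertices using the amplified miss probability $\e^{-kr}$ with $r = \lceil (\ln n + 2\ln\ln n)/k \rceil$ (which is where $k \le \log n$ is used); this does not affect the correctness of your deduction.
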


When Matthews' bound is not tight, we must proceed more indirectly and bound $k$ in terms of \emph{the gap}.
Let $g(n) = \frac{C}{h_{\max}}$ be the gap between the cover time and the maximum hitting time.
We find it remarkable that, 
using this gap, we can prove a nearly linear speed-up for $k$ less than $g(n)$ without 
knowing the actual cover time.

\begin{thm}\label{c-kspeed2}
If $g(n)= \frac{C(G)}{h_{\max}} \to \infty$
and $k \le O(g^{1-\epsilon}(n))$ for some $\epsilon < 1$,
then $C^{k}(G) = \frac{C(G)}{k} + o(\frac{C(G)}{k})$, and $S^{k}(G) \ge k - o(k)$.
\end{thm}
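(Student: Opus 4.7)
My plan is to run $k$ independent walks from the worst starting vertex $u$ (so $C^k(G)=E[\tau^k_u]$) for time $T$ just slightly larger than $C(G)/k$, and to argue by a two-phase analysis that they cover $G$ in expected time $C(G)/k+o(C(G)/k)$; the speed-up bound $S^k(G)\geq k-o(k)$ is then immediate from $S^k=C/C^k$. The first phase handles the bulk of the vertices using Markov's inequality and independence of walks; the second cleans up the residue using the generalized Matthews bound of Theorem~\ref{thm:generalMatthew}.

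In the main phase, I fix a vertex $v$ and a walk $X^{(i)}$. Markov's inequality gives $\Pr[\tau_v^{(i)}>T]\leq h(u,v)/T\leq h_{\max}/T$, which for $T\approx C/k$ is $\approx k/g$. Since the $k$ walks are independent, the probability that $v$ is unvisited by all of them at time $T$ is at most $(k/g)^k$, and by the hypothesis $k\leq O(g^{1-\epsilon})$ this is $\leq g^{-\epsilon k}$. A union bound over $v$ then bounds the expected number of unvisited vertices at time $T$ by $n g^{-\epsilon k}$.

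In the mop-up phase, I restart the analysis at time $T$ from wherever the $k$ walks happen to be and apply Theorem~\ref{thm:generalMatthew} to the random residual set $R$: the additional expected cover time is at most $(\e+o(1))\,h_{\max}H_{|R|}/k$. Taking expectations and using Jensen's inequality (since the harmonic number $H_m$ is concave in $m$), this is at most $(\e+o(1))\,h_{\max}H_{n g^{-\epsilon k}}/k$. Summing the two phases, $C^k\leq C/k+O\bigl((C/k)(\log n)/g\bigr)$, which yields the theorem as soon as the correction is a $(1+o(1))$-factor of $C/k$.

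The main obstacle is making that additive error genuinely $o(C/k)$ uniformly in $k\leq O(g^{1-\epsilon})$. When $g\gg\log n$ the factor $(\log n)/g$ is already $o(1)$, so nothing more is needed; when $g\to\infty$ only slowly, however, the Markov tail is loose and the $\e$-constant from the generalized Matthews bound becomes dangerous. The fix I expect is to enlarge $T$ by a lower-order term of the form $h_{\max}\,g^{\epsilon/2}/k=o(C/k)$, so that each individual walk already misses only an $o(1)$ fraction of the vertices by time $T$, driving $E[|R|]$ small enough that a more careful Matthews-type bound on the (now empirically small) ``hard'' residual set absorbs the $\e$-factor into the $o(C/k)$ error and gives the theorem.
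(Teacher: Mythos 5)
There is a genuine gap, and it lies in your mop-up phase. By Matthews' theorem the gap satisfies $g(n)=C/h_{\max}\le H_n=\ln n+\Theta(1)$ for \emph{every} graph, so your ``easy case'' $g\gg\log n$ is vacuous and the correction factor $(\log n)/g$ in your final bound is always $\Omega(1)$ --- and is $\omega(1)$ precisely in the regime the theorem is designed for, namely when Matthews' bound is not tight and $g=o(\log n)$. Concretely, take $k=2$ and $g=\log\log n$: your first phase leaves an expected residual of $n(k/g)^k=4n/(\log\log n)^2$ vertices, so $H_{E[|R|]}=\Theta(\log n)$ and the mop-up via Theorem~\ref{thm:generalMatthew} costs $\Theta(h_{\max}\log n/k)=\Theta\bigl((C/k)\cdot(\log n)/g\bigr)\gg C/k$. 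Even when Matthews is tight ($g=\Theta(\log n)$) you only get $C^k\le C/k+O(C/k)$, which proves $S^k=\Omega(k)$ but not the sharp $C^k=(1+o(1))C/k$ claimed. Your proposed fix of enlarging $T$ by $h_{\max}g^{\epsilon/2}/k$ does not help: the Markov tail $h_{\max}/T$ only improves from $k/g$ to $k/(g+g^{\epsilon/2})$, so the residual stays near $n$ for constant $k$ and the mop-up cost is unchanged. The root cause is that Markov's inequality on per-vertex hitting times is too lossy to capture the leading constant, and any union-bound/Matthews cleanup necessarily pays a $\log n$ where only a $\log g$ (or $\log k$) is affordable.

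The paper's route avoids this entirely by using concentration of the \emph{single-walk cover time} rather than per-vertex tail bounds: Aldous's theorem (Theorem~\ref{thm:aldous}) gives that, since $C/h_{\max}\to\infty$, one walk covers $G$ within $(1+o(1))C$ steps with probability $1-o(1)$. The composition lemma (Lemma~\ref{l-composition}) then splits such a covering path into $k$ consecutive segments of length $(1+o(1))C/k$ and lets walk $i$ first hit the starting vertex of segment $i$ --- which takes at most $\ell\cdot 2h_{\max}$ steps except with probability $k2^{-\ell}$ --- and then reproduce that segment. This yields Theorem~\ref{t-kspeed}, $C^k\le(1+o(1))C/k+(3\log k+2f(n))h_{\max}$, and choosing $f(n)=\log g(n)$ makes the additive term $O(h_{\max}(\log k+\log g))=o(h_{\max}g/k)=o(C/k)$ because $k\le O(g^{1-\epsilon})$ forces $k\log g=o(g)$. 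If you want to complete your proof, you need this path-splitting idea (or an equivalent use of cover-time concentration); the additive overhead must scale like $h_{\max}\log k$, not $h_{\max}\log n$.
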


These results raise several interesting questions about speed-ups on graphs in general:
is $k$ an upper bound on the best possible speed-up,
does proving a linear speed-up generally require bounding $k$,
and what really characterizes the best possible speed-up?

For the first question,
we have been unable to prove that $k$ is an upper bound on the best possible speed-up.
We do know that a wide range of speed-ups is possible, 
and that sometimes the speed-up can be much less than $k$.
The following result shows that the speed-up on a cycle is limited to $\log k$.

\begin{theorem}\label{thm:cycle}
For all $k<\e^{n/4}$, the speed-up on the cycle $L_n$ with $n$ vertices
is $S^k(L_n) = \Theta(\log k)$.
\end{theorem}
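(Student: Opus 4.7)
The plan is to reduce both bounds to estimates on the range of $k$ independent simple random walks. Lift each walk $X_i$ on $L_n$ to a walk $Y_i$ on $\mathbb{Z}$ by choosing $\pm 1$ increments so that $X_i(s)\equiv Y_i(s)\pmod{n}$. Since a walk on $\mathbb{Z}$ visits every integer in its range, the set of vertices visited by walker $i$ during $[0,T]$ is the arc obtained by projecting $[m_i(T),M_i(T)]$ modulo $n$, where $M_i(T)=\max_{s\le T}Y_i(s)$ and $m_i(T)=\min_{s\le T}Y_i(s)$. Every such arc contains the starting vertex, so their union is a single arc on the cycle, which covers $L_n$ if and only if
\[
R(T):=\max_i M_i(T)-\min_i m_i(T)\ \ge\ n-1.
\]
The theorem therefore reduces to locating, up to constants, the time at which $R(T)$ first reaches $n$.

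For the upper bound $C^k(L_n)=O(n^2/\log k)$, I would set $T=Cn^2/\log k$ for a sufficiently large constant $C$ and show that $\Pr(R(T)\ge n-1)\ge 1/2$; a standard restart argument then gives $C^k(L_n)\le 2T$. Since $M_1(T)\ge Y_1(T)$, the Gaussian lower tail estimate for $Y_1(T)\sim N(0,T)$ gives $p:=\Pr(M_1(T)\ge n/2)=\Omega(k^{-1/(8C)}/\sqrt{\log k})$. By independence, $\Pr(\max_i M_i(T)<n/2)\le (1-p)^k\le \exp(-pk)$, and for any $C>1/8$ we have $pk\to\infty$, so $\Pr(\max_i M_i(T)\ge n/2)\to 1$. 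The symmetric statement for $\min_i m_i(T)\le -n/2$, combined with a union bound, then gives $\Pr(R(T)\ge n-1)\ge 1/2$.

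For the lower bound $C^k(L_n)=\Omega(n^2/\log k)$, I would set $T=cn^2/\log k$ for a sufficiently small constant $c$ and show $\Pr(R(T)\ge n-1)\le 1/2$, whence $E[\tau^k]\ge T\cdot\Pr(\tau^k>T)\ge T/2$. The reflection principle together with Hoeffding's inequality gives $\Pr(M_1(T)\ge n/2)\le 2\Pr(Y_1(T)\ge n/2)\le 2\exp(-n^2/(8T))=2k^{-1/(8c)}$, and a union bound over the $k$ walks gives $\Pr(\max_i M_i(T)\ge n/2)\le 2k^{1-1/(8c)}$. Taking $c<1/16$ makes this at most $2/k$; combining with the symmetric bound on $\min_i m_i$ yields $\Pr(R(T)\ge n-1)\le 4/k$, which is at most $1/2$ once $k$ exceeds a small constant. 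For the finitely many remaining values of $k=O(1)$, the same estimates with $T=c'n^2$ (independent of $k$) and $c'$ sufficiently small yield the same conclusion, giving $C^k(L_n)=\Omega(n^2)=\Omega(n^2/\log k)$.

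The main obstacle is calibrating constants so that a single critical time $\Theta(n^2/\log k)$ appears in both directions: the Gaussian lower tail of $Y_1(T)$ must be sharp enough that $kp\to\infty$ for the upper bound, while Hoeffding's upper tail must be tight enough that $kp\to 0$ for the lower bound; both sides rely on the reflection principle to pass between $Y_1(T)$ and its running maximum. The hypothesis $k<\e^{n/4}$ keeps $\log k<n/4$, ensuring that the critical time $T=\Theta(n^2/\log k)$ stays comfortably above the trivial $n-1$ lower bound on the cover time, and that the Gaussian/Hoeffding estimates remain in the diffusive regime where walks on $\mathbb{Z}$ faithfully track walks on $L_n$.
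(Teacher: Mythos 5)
Your overall strategy is the same as the paper's: both directions reduce to one-dimensional displacement estimates for the lifted walks on $\mathbb{Z}$ (the paper's lower bound is your lower bound in contrapositive form, via reflection, a Chernoff/Hoeffding tail, and a union bound over the $k$ walks; its upper bound likewise rests on a lower bound for the probability that a single walk drifts by $n/2$ in time $\Theta(n^2/\log k)$, followed by independence over the $k$ walks and a restart). Your packaging via the combined range $R(T)$ is clean and correct. However, two steps need repair before this is a proof.

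First, the ``Gaussian lower tail estimate for $Y_1(T)\sim N(0,T)$'' is not valid across the whole range $k<\e^{n/4}$. When $\log k=\Theta(n)$ you have $T=Cn^2/\log k=\Theta(n)$, so the required deviation $n/2$ is a \emph{constant fraction} of the number of steps: this is the large-deviation regime, not the diffusive one, and the true probability $\e^{-T\,I(n/2T)}$ is strictly smaller than the Gaussian prediction $\e^{-n^2/8T}$ (the binomial rate function satisfies $I(a)>a^2/2$). The conclusion survives because $a=n/(2T)=\log k/(2Cn)\le 1/(8C)$ can be made small by taking $C$ large, so $I(a)\le(1+\epsilon)a^2/2$ and $pk\to\infty$ still holds --- but you must prove a genuine binomial tail \emph{lower} bound in this regime rather than cite the CLT. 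This is exactly what the paper's Proposition~\ref{prp:folk} supplies (note its exponent $-3c^2-4$ versus the Gaussian $-2c^2$, and its hypothesis $n\ge 16c^2$, which is where the restriction $k<\e^{n/4}$ actually enters). Your closing remark that the hypothesis keeps the estimates ``in the diffusive regime'' is precisely what fails at the edge of the allowed range.

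Second, ``a standard restart argument then gives $C^k\le 2T$'' is too quick: after a failed block the $k$ walkers are scattered, and your covering criterion ($\max_i M_i\ge n/2$ and $\min_i m_i\le -n/2$) exploits the common starting point, so the per-block success probability does not automatically re-apply. The easiest repair is to note that from any configuration the conditional expected remaining cover time is at most the single-walk cover time $O(n^2)$ (just follow one walker), so $C^k\le T+\Pr[\tau^k>T]\cdot O(n^2)$; since your failure probability is $\exp(-\Omega(k^{c}))=o(1/\log k)$ for $k$ beyond a constant, this still yields $C^k=O(n^2/\log k)$. (The paper sidesteps the issue by using a translation-invariant per-block event --- a single walker wrapping all the way around --- at the cost of a worse constant.) With these two patches your argument is complete and matches the published one in substance.
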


\noindent
On the other hand, it is possible there are graphs for which the speed-up is much more than $k$.
For example, the following result shows that, when the walk starts at the node in the center of 
the graph 
(we can't prove this is true from other nodes in the graph), 
the speed-up is \emph{exponential} in $k$:

\begin{theorem}\label{thm:bar-bell}
For a bar-bell graph $B_n$ on $n$ vertices (see Section~\ref{s-barbell} for a definition) if $v_{\rm c}$ is
the center of the bar-bell then $C_{v_{\rm c}} = \Theta(n^2)$ but $C_{v_{\rm c}}^k = O(n)$ for $k=\Theta(\log n)$.
\end{theorem}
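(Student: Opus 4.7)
The proof splits into two independent analyses. For the single-walk upper and lower bounds $C_{v_c}=\Theta(n^2)$, I would use a three-part decomposition of the cover time from $v_c$. The walk first reaches one of the two bells, say $L$, in $O(n)$ expected steps; it then covers the starting bell from its entry point in an additional $O(n\log n)$ steps by the clique cover-time bound; and it finally must commute to the other bell $R$ to finish covering the graph. The commute-time identity $\mathrm{comm}(u,v)=2m\cdot R_{\mathrm{eff}}(u,v)$, applied with $m=\Theta(n^2)$ (each bell is a clique on $\Theta(n)$ vertices) and $R_{\mathrm{eff}}(L,R)=O(1)$ (the connecting path is short relative to $n$), gives $\Theta(n^2)$ for the bell-to-bell transition, and this term dominates the other two contributions, supplying matching upper and lower bounds.

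For the $k$-walk bound with $k=c\log n$, the central idea is that the $k$ independent walks behave as a probabilistic fork at $v_c$. Because the connecting path is short, a single walk from $v_c$ enters each of $L$ and $R$ within some fixed window of length $T=O(n)$ with constant probability $p>0$ (a standard hitting-time computation on the short path, with the two clique gates acting as near-absorbing boundaries). By independence and a Chernoff bound, $\Omega(\log n)$ of the $k$ walks enter each bell within $T$ steps with probability $1-n^{-\Omega(1)}$. Conditional on that event, the task reduces to covering a clique on $\Theta(n)$ vertices using $\Omega(\log n)$ walks: a direct coupon-collector-with-Chernoff estimate shows that an additional $\Theta(n)$ steps then suffice to visit every vertex with high probability, since at each step any fixed vertex of the bell is hit with probability $\Omega(\log n/n)$, giving expected $\Omega(\log n)$ visits over $\Theta(n)$ steps, and a union bound over the $n$ vertices of the bell yields full coverage with probability $1-n^{-\Omega(1)}$.

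Combining the two bells via one more union bound, $\tau^k_{v_c}=O(n)$ holds with probability $1-n^{-\Omega(1)}$. On the complementary event the trivial estimate $\tau^k_{v_c}\le\tau_{v_c}$, whose expectation is $O(n^2)$, contributes at most $n^{-\Omega(1)}\cdot O(n^2)=o(n)$ to the expected cover time, yielding $C^k_{v_c}=O(n)$ overall. The main obstacle is the quantitative ``fork'' step: establishing that each individual walk has at least constant probability of being inside a prescribed bell by time $T=O(n)$. This reduces to a hitting-time estimate for the walk on the short connecting path biased by the absorbing effect of the two clique gates (since once a walk slips into a bell it is overwhelmingly likely to stay for many steps). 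Once the per-walk probability is in hand, the rest of the argument is a clean application of Chernoff and union bounds already used above.
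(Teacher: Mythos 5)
Your proposal follows essentially the same route as the paper's proof: send $\Theta(\log n)$ walks into each bell (in the paper's $B_n$ this happens already at the very first step, since $v_c$ has degree $2$ and each token falls into one of the two cliques with probability $1/2$), cover each bell as a clique of size $\Theta(n)$ with $\Theta(\log n)$ walks in $O(n)$ steps via a per-vertex miss probability and a union bound, and absorb the low-probability failure events using the $O(n^2)$ single-walk bound; the paper additionally controls the event that too many tokens return to the center during the covering phase, which you acknowledge only informally via the ``near-absorbing'' remark. The one step to tighten is the final accounting: you cannot write $E[\tau^k_{v_c}\mathbf{1}_{\mathrm{bad}}]\le \Pr[\mathrm{bad}]\cdot E[\tau_{v_c}]$ since the bad event is defined by the walks themselves and is correlated with $\tau_{v_c}$ --- instead apply the Markov property at time $T$ and bound the residual expected cover time from any configuration by $\max_v C_v=O(n^2)$, choosing the Chernoff constants so the failure probability is $O(n^{-2})$ and the error term is genuinely $o(n)$.
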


For the second question, proving a linear speed-up in general does indeed require bounding $k$.
In fact, the situation turns out to be rather complex, 
since the speed-up depends not only on the graph itself,
but also on the relationship between the size of the graph and $k$. 
For example, using Theorem~\ref{thm:cycle}, we can show that there may be a full spectrum of 
speed-up behaviors even for a single graph:

\begin{theorem}\label{t-grid}
Let $G$ be a two dimensional $\sqrt{n} \times \sqrt{n}$ grid on the torus 
(for which Matthews' bound is tight).
\begin{enumerate}
\item For $k\le \log n$, the speed-up is $S^k(G) = \Omega(k)$
\item For $k\ge \log^3 n$ the speed-up is $S^k(G) = o(k)$.
\end{enumerate}
\end{theorem}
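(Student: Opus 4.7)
The proof divides naturally into the two halves of the statement.

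\emph{Part 1.} The two-dimensional torus has $C(G)=\Theta(n\log^2 n)$ and $h_{\max}(G)=\Theta(n\log n)$ (Table~\ref{results table}), so $C(G)=\Theta(h_{\max}(G)\cdot\log n)$; Matthews' bound is tight up to constants. Theorem~\ref{s1-thit} then directly yields $S^k(G)=\Omega(k)$ for every $k\le\log n$.

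\emph{Part 2.} The plan here is to exploit the slow diffusion of two-dimensional random walks: in $t$ steps a walk is confined to a disk of radius $O(\sqrt{t\log k})$ about its starting vertex with high probability (the $\log k$ factor being the price of a union bound over $k$ walks). Since the torus has diameter $\Theta(\sqrt n)$, covering it forces some walk to reach a vertex at graph-distance $\Omega(\sqrt n)$ from the origin, which cannot happen unless $t=\Omega(n/\log k)$.

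To carry this out, I lift each walk $X^{(j)}$ from the torus to $\mathbb{Z}^2$, obtaining a process $Y^{(j)}$ whose two coordinates are independent lazy random walks on $\mathbb{Z}$ with steps in $\{-1,0,+1\}$; as long as $\|Y^{(j)}(s)\|_\infty\le\sqrt n/2$, the lift faithfully records torus distances. Azuma--Hoeffding together with Doob's maximal inequality then gives
\[
\Pr\bigl(\max_{s\le t}\|Y^{(j)}(s)\|_\infty\ge R\bigr)\le 4\e^{-R^2/(2t)}.
\]
Setting $R=\sqrt n/4$ and union-bounding over the $k$ walks shows that for $t\le c\,n/\log k$ with $c$ a suitably small constant, with probability at least $1/2$ no walk ever reaches graph-distance $\sqrt n/4$ from the origin, so the torus is not covered. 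Hence $C^k(G)\ge\Omega(n/\log k)$ and therefore $S^k(G)=C(G)/C^k(G)\le O(\log^2 n\cdot\log k)$.

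Finally, the function $(\log^2 n)(\log k)/k$ is decreasing in $k$ for $k\ge\e$ and already equals $3(\log\log n)/\log n=o(1)$ at $k=\log^3 n$, so $S^k(G)=o(k)$ uniformly in $k\ge\log^3 n$, as required. The chief delicacy is the torus-to-$\mathbb{Z}^2$ comparison, which only works up to half the diameter; the Azuma bound must therefore be combined carefully with the restriction $R\le\sqrt n/2$ to rule out shortcuts via wrap-around.
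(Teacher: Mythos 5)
Your Part~1 is exactly the paper's argument: the torus has $C(G)=\Theta(n\log^2 n)$ and $h_{\max}=\Theta(n\log n)$, so Theorem~\ref{s1-thit} applies verbatim. For Part~2 both you and the paper prove the same key estimate, $C^k(G)\ge\Omega(n/\log k)$, hence $S^k(G)\le O(\log^2 n\cdot\log k)$, and then finish with the identical calculation that $(\log^2 n)(\log k)/k$ is decreasing and is already $o(1)$ at $k=\log^3 n$. The difference is in how that estimate is obtained. The paper projects each walk onto a single coordinate, observes that this yields a (lazy) $k$-walk on a cycle of length $\sqrt n$, and invokes its cycle lower bound (Lemma~\ref{l-24}), which rests on a Chernoff bound for the net displacement plus a reflection-style step converting ``reaches distance $n/2$ at some time $\le t$'' into ``is displaced by $n/2$ at time $t$.'' You instead lift to $\mathbb{Z}^2$ and control the running maximum directly via Azuma--Hoeffding combined with Doob's maximal inequality. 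Both are concentration-of-displacement arguments; yours is self-contained and handles the maximal displacement more cleanly (no reflection step), while the paper's reduction reuses an already-proved lemma and extends with no extra work to $d$-dimensional tori. Two small remarks: the two coordinates of the lifted torus walk are \emph{not} independent (exactly one coordinate moves at each step), but your argument never needs independence --- each coordinate is marginally a martingale with increments in $\{-1,0,+1\}$, and a union bound over the two coordinates and two tails gives your factor of $4$, so this is a misstatement rather than a gap. Also, your union bound over $k$ walks gives failure probability of order $k^{1-1/(32c)}$, which is below $1/2$ only for $k$ at least a constant; that is harmless here since Part~2 concerns $k\ge\log^3 n$, but it is worth saying if you want the lower bound $C^k\ge\Omega(n/\log k)$ for all $k$ as the paper states it.
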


Finally, what property of a graph determines the speed-up?  We do not have a complete answer to this question.
We are able to relate the speed-up on a graph to the ratio between cover time of the graph and the maximal hitting
time of the graph as seen in Theorem~\ref{c-kspeed2} and further also to the mixing time of the graph. Intuitively
if a graph has a fast mixing time then the random walks spread in different parts of the graph and explore it essentially
independently.

\begin{thm}
\label{mixing theorem}
Let $G$ be a $d$-regular graph. If the mixing time of $G$ is $t_m$ then for $k \le n$ the speed-up is $S^k = \Omega(\frac{k}{t_m \ln n})$
\end{thm}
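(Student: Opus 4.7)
The plan is to establish the bound $C^k(G) = O(n t_m \log^2 n / k)$ starting from \emph{any} vertex, and then combine it with the classical lower bound $C(G) \geq (1-o(1)) n \ln n$, which holds for every connected graph on $n$ vertices, yielding
\[
S^k(G) = \frac{C(G)}{C^k(G)} = \Omega\!\left(\frac{n \ln n \cdot k}{n t_m \log^2 n}\right) = \Omega\!\left(\frac{k}{t_m \ln n}\right).
\]

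To bound $C^k(G)$, I would partition time into consecutive blocks of length $L = \lceil c\, t_m \ln n \rceil$ for a suitably large constant $c$. Iterating the mixing estimate $c$ times shows that, conditional on any starting state, the position of a walk at the end of a block is within total variation distance at most $n^{-3}$ of the uniform distribution on $V$ (since $G$ is $d$-regular, the stationary distribution is uniform). A standard maximal coupling then lets me couple the end-of-block position of each walk $j\in\{1,\ldots,k\}$ in each block $i$ to a truly uniform vertex $U_{i,j}$ so that the two agree with probability at least $1-n^{-3}$. Over $B$ blocks this produces $kB$ samples, with union-bounded coupling-failure probability at most $kB/n^3$.

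Next I would apply a coupon-collector argument to the idealized uniform samples $\{U_{i,j}\}$. With $M$ i.i.d.\ uniform samples the probability that some vertex is missed is at most $n(1-1/n)^M \leq n e^{-M/n}$, so $M \geq 3 n \ln n$ already gives $o(1)$ failure probability. Setting $B = \lceil 3 n \ln n / k \rceil$ (which is at least $1$ because $k \leq n$) therefore produces coverage with high probability in total time $T = BL = O(n t_m \log^2 n / k)$, with the coupling error also $o(1)$. A restart argument (if coverage has not happened by time $T$, begin afresh) turns the high-probability statement into the expectation bound $C^k(G) = O(n t_m \log^2 n / k)$.

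The main technical obstacle is the dependence among the successive end-of-block positions of a single walk: the sample at the end of block $i+1$ depends on where the walk was at the end of block $i$. Crucially, however, the mixing estimate holds \emph{uniformly} over the conditioning, so the coupling can be built block by block, replacing the true joint distribution with an i.i.d.\ uniform sequence at an additive cost of only $n^{-3}$ per block. A related subtle point is the choice of block length: blocks of length $t_m$ would drive the per-block TV distance down only to a constant, which is too weak for the coupling-plus-coupon-collector argument; paying a $\ln n$ factor inside the block is what both makes the coupling argument go through and accounts for the $\ln n$ appearing in the denominator of the stated speed-up.
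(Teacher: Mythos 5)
Your proposal is correct and follows essentially the same route as the paper: decompose the walks into blocks of length $\Theta(t_m\ln n)$, use amplified mixing to make each end-of-block position nearly uniform (the paper phrases this as multiplicative closeness of the sampled sequence to a walk on the clique, you phrase it as a maximal coupling to i.i.d.\ uniform samples), finish the upper bound with a coupon-collector/restart argument, and divide by the universal lower bound $C(G)=\Omega(n\ln n)$. The coupling formulation is a clean equivalent of the paper's comparison-to-$K_n$ step, so nothing essential differs.
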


\noindent
To this end, questions regarding minimal and maximal bounds on the speed-up as a function of $k$ remain open, but we do conjecture that speed-up
is at most linear and at least of logarithmic order:

\begin{conjecture}
For any graph $G$ and any $k\ge 1$, $S^k(G)\le O(k)$.
\end{conjecture}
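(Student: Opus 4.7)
The plan is to prove the equivalent bound $C^k(G) = \Omega(C(G)/k)$ by simulating the $k$ parallel walks with a single longer walk. Fix a starting vertex $v$ realizing $C_v = C(G)$ and set $T = C^k_v$. By definition, $k$ independent walks $W^{(1)}, \ldots, W^{(k)}$ from $v$ jointly cover $G$ in expected time $T$. First I would construct a single walk $X$ from $v$ that mimics the trajectory of $W^{(1)}$ for $T$ steps, then walks from its current endpoint back to $v$ (expected time at most $h_{\max}$), then mimics $W^{(2)}$, and iterates through all $k$ walks. Using the strong Markov property at each return to $v$, the $k$ ``phases'' of $X$ are jointly distributed as $k$ independent walks from $v$; in particular, the set of vertices visited by $X$ contains the union visited by the $W^{(i)}$, so $X$ covers $G$ whenever the simulated parallel walks do. Taking expectations gives
\[
C(G) \le k\, T + k\, h_{\max} = k\, C^k(G) + k\, h_{\max},
\]
and hence $S^k(G) \le k\bigl(1 + h_{\max}/C^k(G)\bigr)$.

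To finish and conclude $S^k(G) = O(k)$ it suffices to establish $C^k(G) = \Omega(h_{\max})$. The plan here is to pick a pair of vertices $(u, w)$ realizing $h(u, w) = h_{\max}$, start all $k$ walks at $u$, and lower-bound the $k$-walk hitting time from $u$ to $w$. Specifically, I would try to prove $E\bigl[\min_{i \le k} \tau^{(i)}_{u \to w}\bigr] = \Omega(h_{\max}/k)$; since $C^k(G) \ge C^k_u \ge E\bigl[\min_i \tau^{(i)}_{u \to w}\bigr]$, plugging into the simulation bound would yield $C(G) \le O(k\, C^k(G))$.

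The hardest part of the plan is proving this $k$-fold minimum inequality for hitting times. For arbitrary non-negative random variables $E[\min_i X_i]$ can be much smaller than $E[X]/k$, so the argument must genuinely exploit the structure of random walks on graphs. For reversible Markov chains, hitting times of a fixed target often have approximately exponential tails, which would give exactly the bound needed; the true difficulty is making this quantitative and uniform across all graphs, including those that interpolate between fast-mixing and slowly-mixing regimes and those such as the barbell of Theorem~\ref{thm:bar-bell} where hitting times are large but can still be short with non-negligible probability. I expect this is exactly where the conjecture becomes open: the simulation step is essentially tight, and matching it with a general lower bound on $C^k$ in terms of $h_{\max}$ appears to require a hitting-time concentration inequality for parallel walks beyond what the techniques of Section~\ref{s-kspeed} provide.
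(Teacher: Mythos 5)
This statement is a \emph{conjecture}: the paper offers no proof and explicitly says the authors ``have been unable to prove that $k$ is an upper bound on the best possible speed-up,'' so there is no proof of record to compare yours against. That said, your opening reduction is sound in spirit and is the standard one: a single walk that runs $k$ phases, returning to the start vertex between phases, gives $C(G) \le O\bigl(k\,(C^k(G) + h_{\max})\bigr)$ and hence $S^k(G) \le O\bigl(k\,(1 + h_{\max}/C^k(G))\bigr)$, modulo routine care in combining Markov's inequality with the random return times (you cannot literally run each phase for the expectation $T = C^k_v$; you run phases of length $2T$ and iterate).

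The genuine gap is in the second half, and it is not merely that the minimum-of-hitting-times lemma is hard to prove --- the statement you actually need is false, and the weaker statement you propose to prove does not suffice. To conclude $S^k(G) = O(k)$ from your inequality you need $C^k(G) = \Omega(h_{\max})$ with no loss in $k$. The barbell graph of Section~\ref{s-barbell} refutes this: start all $k$ walks at a vertex $u$ inside one bell; the escape time from that bell is approximately exponential with mean $\Theta(n^2) = \Theta(h_{\max})$, so the minimum over $k$ independent walks is $\Theta(n^2/k)$, giving $C^k(B_n) = \Theta(h_{\max}/k)$ for a wide range of $k$. The bound you propose to prove, $E[\min_i \tau^{(i)}_{u\to w}] = \Omega(h_{\max}/k)$, is therefore the correct order of what is true --- but plugging $C^k = \Omega(h_{\max}/k)$ into $S^k \le k + k\,h_{\max}/C^k$ yields only $S^k = O(k^2)$. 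So even granting the hitting-time concentration inequality that you correctly flag as the hard step, the approach as structured cannot close the conjecture: the additive $k\,h_{\max}$ cost of the $k$ restarts in the simulation is exactly what would have to be eliminated, and that is where the problem remains open.
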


\begin{conjecture}
For any graph $G$ and any $n \ge k\ge 1$, $S^k(G)\ge \Omega(\log k)$.
\end{conjecture}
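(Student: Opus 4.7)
The plan is to prove the equivalent statement $C^k(G) \le O(C(G)/\log k)$. A first attempt uses the generalized Matthews bound (Theorem~\ref{thm:generalMatthew}) together with the trivial lower bound $C \ge h_{\max}$, yielding $S^k \ge k/(\e\ln n)$. This already dominates $\log k$ once $k \gtrsim \ln n \cdot \ln\ln n$, so the real difficulty is the complementary regime $2 \le k \lesssim \ln n \cdot \ln\ln n$, in which $\log k$ is at most $O(\log\log n)$ and a finer argument is needed.

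To handle small $k$, I would attempt to prove the following key lemma, uniformly in the starting vertex: there exists a universal $\alpha>0$ such that for every graph $G$ and every $2\le k\le n$, $\Pr[\tau_i \le \alpha C/\log k] \ge 1/k$. Granting the lemma with $T=\alpha C/\log k$, independence of the $k$ walks gives $\Pr[\min_i\tau_i > T] \le (1-1/k)^k \le 1/\e$. Conditional on no walk having finished by time $T$, each walk is at some state with remaining cover time at most $C$, so the lemma may be applied afresh at the current state; iterating via the Markov property gives $\Pr[\min_i\tau_i > jT] \le \e^{-j}$ and hence $E[\min_i\tau_i] \le T\cdot \e/(\e-1) = O(C/\log k)$, as desired.

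To prove the key lemma, I would adapt Matthews' random-ordering trick. Pick a uniform permutation $\pi$ of $V$ and write $\tau=\max_i T_{\pi(i)}$, where $T_v$ is the first-hitting time of $v$. Matthews bounds $E[\tau]\le h_{\max}H_n$ by noting that each vertex contributes only with probability $1/i$ (as the $i$-th new vertex seen); the lower-tail claim asks instead that with probability at least $1/k$, \emph{all} hitting times $T_{\pi(i)}$ are simultaneously at most $O(C/\log k)$. For graphs where $C$ and $h_{\max}$ are of the same order (cycle, line, barbell-type), individual hitting times have nontrivial lower tails by reflection-principle/diffusion heuristics — e.g.\ $\Pr[T_v\le t]\sim 1-2\Phi(-d(s,v)/\sqrt t)$ for a one-dimensional walk — and these are just large enough to make the lemma hold, matching the $\Theta(\log k)$ speedup we already know in these examples.

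The principal obstacle is that this key lemma is \emph{false} on graphs with highly concentrated cover time, most strikingly on $K_n$, where $\tau = n\ln n \pm o(n\ln n)$ almost surely and $\Pr[\tau \le cC/\log k]$ is negligible for small $k$. Yet the conjecture still holds for $K_n$, with a speedup of order $k$ driven by \emph{cooperative coverage}: different walks explore disjoint new vertices, a mechanism captured instead by Theorem~\ref{mixing theorem}. I therefore expect the final proof to take the form of a dichotomy — either the single-walk cover-time distribution has sufficient lower-tail spread and the iterated Markov argument above applies, or the graph mixes rapidly and Theorem~\ref{mixing theorem} delivers at least $\Omega(\log k)$ speedup. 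Showing that \emph{every} graph falls into one of these two cases, with no gap in the middle, is the real difficulty and may require introducing a new structural parameter intermediate between $h_{\max}$ and $t_m$; this is the step I expect to be stuck on.
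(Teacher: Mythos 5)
This statement is posed in the paper as an open \emph{conjecture}; the paper offers no proof of it, and explicitly lists it among the open problems. So there is nothing to compare your attempt against, and the honest question is only whether your proposal closes the conjecture. It does not, and you say as much yourself: your key lemma ($\Pr[\tau_i \le \alpha C/\log k] \ge 1/k$ uniformly over graphs) is, as you correctly observe, false for $K_n$ and more generally for any graph whose cover time is concentrated, and the proposed repair --- a dichotomy between ``lower-tail spread'' and ``fast mixing'' covering all graphs --- is exactly the missing structural insight, not a routine verification. A graph could have cover time concentrated around its mean (killing the tail lemma) while mixing slowly enough that Theorem~\ref{mixing theorem} gives a speed-up below $\log k$; nothing in your sketch rules out such an intermediate case, so the argument has a genuine hole precisely where you flag it.

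There is also a quieter error in your opening reduction. Theorem~\ref{thm:generalMatthew} is stated only for $k \le \log n$, and in that range $k/(\e \ln n) \le 1/\e < \ln 2 \le \log k$, so the ``first attempt'' never actually dominates $\log k$ where the theorem applies. If you extend the theorem's proof to $k > \log n$ (taking $r=1$), the bound degenerates to $C^k \le \e h_{\max}(1+o(1)) + o(C)$, which yields only $S^k = \Omega\left(C/h_{\max}\right)$; on the cycle, where $C = h_{\max} = n^2/2$, this is $\Omega(1)$, not $\Omega(\log k)$, even though the true speed-up there is $\Theta(\log k)$ (Theorem~\ref{thm:cycle}, proved by a bespoke Chernoff/reflection argument, not by Matthews-type bounds). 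So the hard regime is not merely ``small $k$'': it is every $k$ on graphs with a small gap $C/h_{\max}$, and your proposal does not dispose of the large-$k$, small-gap case either. The conjecture remains open.
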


\section{Linear speed-up}\label{s-kspeed}

Linear speed-up in a clique follows from folklore, and we will show
linear speed-up in an expander in Section~\ref{sec:expanders}, so we
begin by stating this simple example from folklore for later use:

\begin{lem}\label{lem:clique}
For $k\le n$ and a clique $K_n$ of size $n$ the speed-up is $S^k(K_n) = k$ (up-to a rounding error).
\end{lem}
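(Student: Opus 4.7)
My plan is to show $C^k(K_n) = \Theta\!\bigl(\tfrac{n\log n}{k}\bigr)$ directly; combined with the classical fact $C(K_n) = (n-1)H_{n-1}$ (a single-walk coupon-collector calculation: after having hit $i$ distinct non-starting vertices, the waiting time for the next one is geometric with success probability $(n-1-i)/(n-1)$), this gives $S^k(K_n) = \Theta(k)$, which is what is meant by ``$k$ up to a rounding error.'' I would attack the upper and lower bounds on $C^k(K_n)$ separately.

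\textbf{Upper bound.} First I would compute, for a fixed $u \ne v$, the exact probability that a single walk starting at $v$ avoids $u$ for $t$ steps: from any current position $w \ne u$ the walk lands on $u$ with probability $1/(n-1)$, so this avoidance probability equals $\bigl(\tfrac{n-2}{n-1}\bigr)^t$. By independence of the $k$ walks, the probability that \emph{none} of them visits $u$ in $t$ steps is $\bigl(\tfrac{n-2}{n-1}\bigr)^{kt}$, and a union bound over the $n-1$ non-starting vertices gives
\[
\Pr[\tau^k_v > t] \;\le\; (n-1)\bigl(\tfrac{n-2}{n-1}\bigr)^{kt} \;\le\; (n-1)\,\e^{-kt/(n-1)}.
\]
Summing this tail (which is at most $1$ for $t \le \tfrac{(n-1)\ln(n-1)}{k}$ and decays geometrically afterward) yields $C^k(K_n) \le \tfrac{(n-1)H_{n-1}}{k}(1+o(1))$.

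\textbf{Lower bound.} For the harder direction I would use a second-moment argument on $U_t := \sum_{u \ne v}\mathbf{1}[u \text{ unvisited at time } t]$, whose mean is $E[U_t] = (n-1)\bigl(\tfrac{n-2}{n-1}\bigr)^{kt}$ by the computation above. The key step is the joint avoidance probability: a single walk avoids both $u$ and $u'$ for $t$ steps with probability $\bigl(\tfrac{n-3}{n-1}\bigr)^t$ (from any $w \notin \{u,u'\}$ the next step lies in $\{u,u'\}$ with probability $2/(n-1)$), so for $k$ walks $\Pr[u, u' \text{ both unvisited}] = \bigl(\tfrac{n-3}{n-1}\bigr)^{kt}$. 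The elementary inequality $(n-3)(n-1) \le (n-2)^2$ then shows this joint probability is at most $\Pr[u \text{ unvisited}]\cdot\Pr[u' \text{ unvisited}]$, so the indicators are pairwise negatively correlated and $\mathrm{Var}(U_t) \le E[U_t]$. Paley--Zygmund now gives $\Pr[U_t \ge 1] \ge E[U_t]^2/(E[U_t]+E[U_t]^2) \ge \tfrac12$ whenever $E[U_t] \ge 1$, which holds for all $t \le \tfrac{(n-1)\ln(n-1)}{k}(1-o(1))$. Summing $\Pr[\tau^k_v > t]$ over these $t$ produces $C^k(K_n) \ge \tfrac{(n-1)H_{n-1}}{2k}(1-o(1))$.

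\textbf{Main obstacle.} The only real obstacle is the second-moment / negative-correlation step in the lower bound; a naive Markov or single-term bound on $U_t$ would only deliver $\Omega(n/k)$ and lose the $\log n$ factor that is essential for matching the $\Theta(n\log n)$ single-walk cover time. The explicit joint-avoidance calculation and the clean ``$3 \le 4$''-type inequality are what make the argument short, and they rely on the vertex-transitivity of $K_n$ (the one-step avoidance probability does not depend on the current position), which is presumably why the authors call this example folklore.
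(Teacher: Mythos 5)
Your argument is correct, but it takes a genuinely different route from the paper's. The paper proves the lemma by a coupling: assuming self-loops, each step of each of the $k$ walks on $K_n$ is an independent uniformly random vertex, so interleaving the $k$ walks' steps round-robin (the ``fair mom'' handing each coupon to the next kid in turn) reproduces exactly the trace of a single walk run $k$ times longer; hence the $k$-walk covers in $\lceil \tau/k\rceil$ rounds, where $\tau$ is the single-walk cover time, and $C^k=C/k$ up to rounding --- an exact identity requiring no asymptotic analysis. You instead compute $C^k(K_n)$ from scratch: a union bound for the upper tail and a second-moment/Paley--Zygmund argument for the lower bound, with the pairwise negative correlation coming from $(n-3)(n-1)\le (n-2)^2$. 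All of your individual steps check out, including the exact avoidance probabilities $((n-2)/(n-1))^t$ and $((n-3)/(n-1))^t$ and the identity $C(K_n)=(n-1)H_{n-1}$. What your approach buys is robustness and explicitness: it is the standard template that generalizes beyond the clique, it does not rely on the round-robin coupling trick, and it supplies the matching lower bound on $C^k$ in detail. What it loses is the exact constant: stopping Paley--Zygmund at $E[U_t]\ge 1$ only yields $\Pr[U_t>0]\ge 1/2$, hence $C^k\ge (1/2-o(1))\,C/k$ and $S^k\le (2+o(1))k$, which is $\Theta(k)$ rather than the lemma's ``$k$ up to a rounding error.'' This is easily repaired --- take the threshold where $E[U_t]\to\infty$ slowly, say $E[U_t]=\log n$, which still permits $t\le (1-o(1))(n-1)\ln(n-1)/k$ and gives $\Pr[U_t>0]\ge 1-o(1)$, so $S^k=(1\pm o(1))k$ --- and in any case every later use of the lemma in the paper (Theorem~\ref{thm:t4} and Theorem~\ref{mixing theorem}) needs only the upper bound $C^k(K_n)=O(n\log n/k)$, which your argument delivers cleanly.
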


\noindent
We now show a linear speed-up in a much larger class of graphs,
as long as $k \leq \log n$.
We begin with Matthews' upper bound $C(G) \leq h_{\max} \cdot H_n$
for the cover time by a single random walk,
and generalize the bound to show that $k$ random walks improve Matthews' bound by a linear factor:

\begin{thm}[Baby Matthew Theorem]\label{thm:generalMatthew}
If $G$ is a graph on $n$ vertices and $k \leq \log n$, then
$$C^k(G) \le {\e + o(1) \over k} \cdot h_{\max} \cdot H_n.$$
\end{thm}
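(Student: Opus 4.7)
The plan is to combine the classical iterated-Markov tail bound for hitting times with a union bound over the $n$ vertices, then convert this tail bound into a bound on expectation. For any two vertices $u,v$ and a single random walk starting at $u$, Markov's inequality gives $\Pr_u[T_v > \e \cdot h_{\max}] \le 1/\e$; since this bound holds from any starting vertex, the strong Markov property applied at the times $\e h_{\max}, 2\e h_{\max}, \dots$ yields, for every positive integer $\ell$,
$$\Pr_u[T_v > \ell \cdot \e h_{\max}] \le \e^{-\ell}.$$
Because the $k$ random walks are independent and start from the same vertex, the probability that none of them hits $v$ within $\ell \cdot \e h_{\max}$ steps is at most $\e^{-k\ell}$, and a union bound over the $n$ vertices gives $\Pr[C^k(G) > \ell \cdot \e h_{\max}] \le n\,\e^{-k\ell}$.

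Next, the plan is to integrate this tail. Choose the critical block count $\ell^* = \lceil H_n/k \rceil$, so that $n \e^{-k\ell^*} \le 1$. Writing $E[C^k] = \int_0^\infty \Pr[C^k > t]\,dt$, split the integral at $t^* = \ell^* \cdot \e h_{\max}$: on $[0,t^*]$ use the trivial bound $1$, and on $[t^*,\infty)$ use the geometric tail $n\e^{-k\ell}$. The first piece contributes at most $\e h_{\max}(H_n/k + 1)$ and the geometric tail contributes $O(\e h_{\max}/(k(1-\e^{-k}))) = O(\e h_{\max}/k)$, giving the raw bound
$$E[C^k] \;\le\; \frac{\e \cdot h_{\max} H_n}{k} \;+\; O(\e h_{\max}).$$

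The final step is to absorb the lower-order additive $\e h_{\max}$ term into the announced $o(1)$ factor. For $k \le \log n$ we have $H_n/k \to \infty$ (at least for $k$ that is slightly sub-logarithmic), so the $\e h_{\max}$ slack is $o(h_{\max} H_n/k)$. For $k$ close to the boundary one can either tune the block size $\alpha h_{\max}$ to $\alpha = n^{(1+o(1))/(k\ell)}$ with $\ell = \lceil H_n/k\rceil$ (so a single large block of size $\approx \e h_{\max}$ suffices when $k \approx \log n$), or, if coverage has not occurred by $t^*$, restart and use the single-walk Matthews bound $h_{\max} H_n$ on the remaining cover time, weighted by the failure probability $n^{-\Omega(1)}$.

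The main obstacle is precisely this last point: the iterated-Markov argument naturally produces an integer-valued block count, which introduces an additive $\Theta(\e h_{\max})$ loss that is of the same order as the target bound when $k$ approaches $\log n$. The delicate part of the proof is therefore the joint choice of the block size and block count so that the leading constant stays at $\e$ and the rounding slack is pushed into a vanishing correction, rather than any step involving the random walks themselves (the tail bound and independence argument are both standard).
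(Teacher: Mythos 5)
Your proposal follows essentially the same route as the paper's proof: iterate Markov's inequality on $h_{\max}$-blocks to get a geometric tail for hitting a fixed vertex, raise it to the $k$-th power by independence, union-bound over the $n$ vertices, and then convert the resulting high-probability statement into an expectation bound (the paper does this last step by falling back on the single-walk Matthews bound with failure probability $1/\ln^2 n$, which is one of the two fixes you propose, rather than by integrating the full geometric tail). The boundary concern you flag for $k = \Theta(\log n)$ — the additive $\Theta(h_{\max})$ rounding slack from the integer block count, which is of the same order as the main term there — is real, but it is equally present and silently absorbed in the paper's own choice $r = \lceil (\ln n + 2\ln\ln n)/k\rceil$, and it is harmless for the downstream use of the theorem, which only needs $S^k = \Omega(k)$.
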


\begin{proof}
Let the starting vertex $u$ of the $k$-walk be chosen. Fix any other vertex $v$ in the graph $G$.
Recall, for any two vertices $u',v'$ in $G$, $h(u',v')\le h_{\max}$. Thus by Markov inequality,
$\Pr[$a random walk of length $\e h_{\max}$ starting from $u$ does not hit $v ] \le 1/\e$.
Hence for any integer $r>1$, the probability that a random walk of length $\e r h_{\max}$
does not visit $v$ is at most $1/\e^r$. (We can view the walk as $r$ independent trials
to visit $v$.) Thus the probability that a random $k$-walk of length $\e r h_{\max}$ starting from $u$ does
not visit $v$ is at most $1/\e^{kr}$. Set $r=\lceil (\ln n + 2 \ln \ln n)/k \rceil$. Then the
probability that a random $k$-walk of length $\e r h_{\max}$ does not visit $v$
is at most $1/(n \ln^2 n)$. Thus with probability at least $1-(1/\ln^2 n)$ a random $k$-walk
visits all vertices of $G$ starting from $u$. Together with Matthews' bound $C(G)\le h_{\max} H_n$,
we can bound the $k$-cover time of $G$ by $C^k(G) \le \e r h_{\max} (1+1/\ln^2) + C(G)/\ln^2 n \le (\e+o(1)) h_{\max} H_n/ k$.
The theorem follows.
\end{proof}

When Matthews' bound is tight,
we have $C(G) = \Theta(h_{\max} \log n)$,
and the linear speed-up is an immediate corollary of
Theorem~\ref{thm:generalMatthew}:
\\ \\ 
\textbf{Theorem~\ref{s1-thit}}
\emph{
If $C(G) = \Theta(h_{\max} \log n)$,
then $S^k(G) = \Omega(k)$ for all $k \leq \log n$.}
\\ \\ 
When Matthews' bound is not tight, the proofs become more complex.
We begin with the following result expressing the $k$-walk cover time
in terms of the single-walk cover and hitting times:

\begin{thm}\label{thm:speedup}\label{t-kspeed}
For any graph G of size $n$ large enough and for any function $f(n) \in \omega(1)$
\begin{displaymath}
C^{k}(G) \:\: \le \:\: \frac{(1+o(1))}{k}\cdot C(G) + (3\log k + 2f(n))\cdot h_{\max}.
\end{displaymath}
\end{thm}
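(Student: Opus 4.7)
The plan is to split the $k$-walk trajectory into two phases: a \emph{main} phase of length $T_1 := (1+1/f(n))\, C(G)/k$, designed to produce the leading $(1+o(1))C(G)/k$ term, and a \emph{cleanup} phase of length $T_2 := (3\log k + 2f(n))\, h_{\max}$, designed to absorb the residual uncovered set. The main-phase length is chosen so that the total budget $kT_1 \approx C(G)$ matches the expected single-walk cover budget, and the cleanup length is chosen to swallow a Matthews-style $\log(\text{residual})$ factor plus a tail-buffer.

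For the main phase I would control the expected size $E[|U|]$ of the set of vertices left uncovered after $T_1$. The tool is the iterated-Markov bound from the proof of Theorem~\ref{thm:generalMatthew}: partitioning a single walk into blocks of length $\e h_{\max}$, each block misses any fixed vertex $v$ with probability at most $1/\e$. Iterating across $T_1/(\e h_{\max})$ blocks and raising to the $k$-th power by independence of the walks gives $\Pr[v \in U] \leq \e^{-kT_1/(\e h_{\max})}$, so that $E[|U|] \leq n\cdot \e^{-(1+o(1))g(n)/\e}$, where $g(n) := C(G)/h_{\max}$.

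For the cleanup phase I would apply Matthews' theorem restricted to the random subset $U$: from any starting configuration, a single walk covers $U$ in expected time at most $h_{\max}\,H_{|U|}$. Taking expectation over $U$ and applying Jensen's inequality to the concave function $H_m \approx \log m$ bounds the expected residual cover time by $h_{\max}\log E[|U|]$. The additional $2f(n)\,h_{\max}$ slack in $T_2$ serves as a Markov tail buffer, converting this expected-value bound into a high-probability one, so that the rare event where the residual exceeds its expectation contributes only $o(C(G)/k)$ to $E[\tau^k]$.

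The main obstacle is forcing $h_{\max}\log E[|U|]$ into the $3\log k\cdot h_{\max}$ budget, i.e.\ securing $E[|U|] = O(k^3)$ uniformly. For graphs with $g(n) \geq \e\log(n/k^3)$ the main-phase bound already delivers this, but when $g(n)$ is only mildly $\omega(1)$ a refinement is required. I would resolve this by a case split: if $C(G) \leq 2(3\log k + 2f(n))\,h_{\max}$ then the trivial inequality $C^k(G)\leq C(G)$ already dominates the RHS of the theorem (since the additive term on its own exceeds $C(G)/2$), and otherwise $g(n) \geq 2(3\log k + 2f(n))$ is large enough that, by applying Baby Matthew from Theorem~\ref{thm:generalMatthew} on $U$ with $k$ walks in place of the single-walk Matthews step, the residual contribution fits into the allocated $(3\log k + 2f(n))\,h_{\max}$ budget while the $(1+1/f(n))$ slack in $T_1$ absorbs the Markov overhead and sharpens the leading coefficient from $(\e+o(1))/k$ to $(1+o(1))/k$.
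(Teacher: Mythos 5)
There is a genuine gap, and it sits in the main phase. Your bound $\Pr[v\in U]\le \e^{-kT_1/(\e h_{\max})}$ is the best that the block-iterated Markov argument can give: blocks of length $c\,h_{\max}$ miss $v$ with probability $1/c$, and the decay rate $(\ln c)/c$ per unit of $h_{\max}$ is maximized at $c=\e$, giving rate $1/\e$. With $T_1=(1+o(1))C(G)/k$ the total budget is $kT_1=(1+o(1))C(G)$, and since Matthews' upper bound forces $g(n)=C(G)/h_{\max}\le \ln n+\Theta(1)$ for \emph{every} graph, the best provable bound on the residual is $E[|U|]\le n\,\e^{-(1+o(1))g(n)/\e}\ge n^{1-1/\e+o(1)}\approx n^{0.63}$. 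Your fallback condition $g(n)\ge \e\log(n/k^3)$ is therefore never satisfiable (it exceeds $\ln n$ for all $k$ up to a polynomial in $n$), and the case split does not rescue the argument: in the case $g(n)>2(3\log k+2f(n))$ the cleanup of $U$ by $k$ walks via Baby Matthews costs $\frac{\e+o(1)}{k}h_{\max}\log|U| = \Theta(h_{\max}\log n/k)=\Theta(C(G)/k)$ whenever Matthews is tight --- which is exactly the regime the theorem is meant to serve --- with a constant bounded away from $0$. The total then comes out to roughly $(1+0.63\,\e+o(1))\,C(G)/k\approx \e\,C(G)/k$, i.e.\ you only reproduce Theorem~\ref{thm:generalMatthew} with its constant $\e$, rather than the claimed leading constant $1$.

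The missing idea is that the leading constant $1$ cannot be obtained from per-vertex hitting probabilities plus a union bound at all; it requires exploiting the concentration of the single-walk cover time. The paper invokes Aldous's theorem (Theorem~\ref{thm:aldous}): when $C/h_{\max}\to\infty$, a \emph{single} walk of length $(1+o(1))C$ covers $G$ with probability $1-o(1)$, with no $\e$ loss and no union bound. It then uses the composition lemma (Lemma~\ref{l-composition}) to let the $k$ walks jointly emulate that one covering trajectory: the trajectory is cut into $k$ consecutive segments of length $C/k$, and walk $i$ first navigates to the starting vertex of segment $i$ (which costs $\ell\cdot 2h_{\max}$ steps with failure probability $k2^{-\ell}$, i.e.\ $O((\log k+\omega(1))h_{\max})$ steps whp) and then reproduces the law of its segment. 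That is where both the $(1+o(1))C/k$ term and the additive $(\log k+\omega(1))\,h_{\max}$ term come from. Your two-phase skeleton and the Markov tail-buffer at the end are sound in form, but without the concentration-plus-composition mechanism the main term cannot be driven below $(\e+o(1))C/k$ when Matthews' bound is tight.
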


\noindent
The proof is at the end of the section. In this case,
we get at least an order of linear speed-up when this upper bound is dominated by the left term.
Choosing $f(n)$ sufficiently small,
informal calculation shows
this happens when $\log k \cdot h_{\max} \leq C/k$
or $k \log k \leq C/h_{\max}$,
which happens when $k = (C/h_{\max})^{1-\epsilon}$.
Once again, when Matthews' bound is tight and $C/h_{\max} = \log n$ we
have the following approximation to our previous result,
which improves the linear speed-up constant from $1/\e$ to $1$ at the
cost of a slight reduction in the choice of applicable $k$:

\begin{cor}\label{c-kspeed}
If $C=\Theta(h_{\max}\log n)$ and $k = O({\log^{1-\epsilon} n})$ for
some $\epsilon < 1$,
then $C^{k} = \frac{C}{k} + o(\frac{C}{k})$, and $S^{k}(G) \ge k - o(k)$.
\end{cor}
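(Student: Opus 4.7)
The plan is to obtain Corollary~\ref{c-kspeed} as a direct instantiation of Theorem~\ref{thm:speedup}, with a careful choice of the slow-growing function $f(n)$. The only nontrivial content is verifying that, under the hypothesis $C=\Theta(h_{\max}\log n)$ and for $k$ in the stated range, the additive error term $(3\log k + 2f(n))\cdot h_{\max}$ from Theorem~\ref{thm:speedup} is negligible compared to the main term $C/k$.

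First I would apply Theorem~\ref{thm:speedup} to write
\[
C^k(G) \;\le\; \frac{1+o(1)}{k}\,C(G) \;+\; \bigl(3\log k + 2f(n)\bigr)\,h_{\max},
\]
and then use the assumption $C=\Theta(h_{\max}\log n)$, which is equivalent to $h_{\max}=\Theta(C/\log n)$. Substituting, the error term becomes $\Theta\!\bigl((3\log k + 2f(n))\,C/\log n\bigr)$. To conclude $C^k = C/k + o(C/k)$, it suffices to show that
\[
\frac{(3\log k + 2f(n))\,k}{\log n} \;=\; o(1),
\]
i.e., $k\bigl(3\log k + 2f(n)\bigr) = o(\log n)$.

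Next I would verify this for $k = O(\log^{1-\epsilon} n)$ by picking $f(n)$ appropriately. The term $k\log k$ is at most $O(\log^{1-\epsilon} n \cdot \log\log n) = o(\log n)$, so the $3\log k$ contribution is already fine. For the $f(n)$ contribution, choose, for instance, $f(n) = \log^{\epsilon/2} n$; this satisfies $f(n)\in\omega(1)$ as required by Theorem~\ref{thm:speedup}, and
\[
k\cdot f(n) \;\le\; O\bigl(\log^{1-\epsilon} n\bigr)\cdot \log^{\epsilon/2} n \;=\; O\bigl(\log^{1-\epsilon/2} n\bigr) \;=\; o(\log n).
\]
Hence $(3\log k + 2f(n))\,h_{\max} = o(C/k)$, giving $C^k(G) \le C/k + o(C/k)$, and therefore $S^k(G) = C/C^k \ge k - o(k)$.

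There is essentially no obstacle here: Theorem~\ref{thm:speedup} does all the work, and the corollary amounts to optimizing the free parameter $f(n)$ against the hypothesis on $k$. The only thing one must be slightly careful about is that $f(n)$ must simultaneously tend to infinity (to invoke the theorem) and grow slowly enough that $kf(n)=o(\log n)$; the exponent $\epsilon/2$ is a convenient choice and any function in $\omega(1)\cap o(\log^{\epsilon} n)$ would do.
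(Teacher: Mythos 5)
Your proof is correct and follows essentially the same route as the paper: both instantiate Theorem~\ref{thm:speedup} and choose the free parameter $f(n)$ in $\omega(1)$ but small enough that $k\,(\log k + f(n))\,h_{\max} = o(C/k)$ under the hypothesis $h_{\max}=\Theta(C/\log n)$. The paper effectively takes $f(n)=\log\log n$ (via its proof of Theorem~\ref{c-kspeed2}) where you take $f(n)=\log^{\epsilon/2} n$; both lie in the admissible range $\omega(1)\cap o(\log^{\epsilon} n)$ you identify, so the difference is immaterial.
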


\noindent
When Matthews' bound is not tight,
we have the following result expressed directly in terms of the gap
$g(n) = \frac{C}{h_{\max}}$ between the cover time and the hitting time:
\\ \\ 
\noindent \textbf{Theorem~\ref{c-kspeed2}}
\emph{
If $g(n)= \frac{C(G)}{h_{\max}} \to \infty$
and $k = O(g^{1-\epsilon}(n))$ for some $\epsilon < 1$,
then $C^{k}(G) = \frac{C}{k} + o(\frac{C}{k})$, and $S^{k}(G) \ge k - o(k)$.}
\\ \\ 
\begin{proof}
Set $f(n) \in \omega(1)$ in Theorem~\ref{thm:speedup} to be $\log(g(n))$,
and the claim follows.
\end{proof}


We now prove Theorem~\ref{t-kspeed}.
Our main technical tool conceptually different from our previous proofs is the following lemma
(see the appendix for the proof).

\begin{lemma}\label{l-composition}
Let $G$ be a graph and $u_1,\dots,u_k$ be some of its vertices, not necessarily distinct.
Let $T_c$ and $p_c$ be such that a random walk of length $T_c$ starting from $u_1$ visits
all vertices of $G$ with probability at least $p_c$. Let $T_h$ and $p_h$ be such
that for any two vertices $u$ and $v$ of $G$, a random walk of length $T_h$ starting
from $u$ visits $v$ with probability at least $p_h$. Let $\ell>1$ be an integer.
Then a random $k$-walk of length $T_c/k + \ell T_h$ starting from vertices $u_1,\dots,u_k$
covers $G$ with probability at least $p_c (1-k(1-p_h)^\ell)$.
\end{lemma}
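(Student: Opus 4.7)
The plan is to couple the $k$-walk with a single long random walk of length $T_c$ starting from $u_1$. Denote this long walk by $W$; by hypothesis it covers $G$ with probability at least $p_c$. I would slice $W$ into $k$ consecutive segments $W_1,\dots,W_k$ of length $T_c/k$ each, letting $w_i$ denote the starting vertex of $W_i$ (so $w_1=u_1$). The high-level strategy is to let walker $1$ realize segment $W_1$ directly and to arrange, for each $i\ge 2$, that walker $i$ first ``catches up'' to $w_i$ within $\ell T_h$ steps and then realizes segment $W_i$ during the subsequent $T_c/k$ steps, so the combined trajectories of the $k$ walkers contain $W_1\cup\cdots\cup W_k$.

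The first technical step is an amplification of the hitting-time bound: for each $i\ge 2$ and any target vertex $v$, a random walk of length $\ell T_h$ starting at $u_i$ visits $v$ with probability at least $1-(1-p_h)^\ell$. I would prove this by splitting the walk into $\ell$ consecutive chunks of length $T_h$ and applying the hypothesis on $T_h$ conditionally at the start of each chunk; the conditional failure probability of the $j$th chunk given the walker's position at time $(j-1)T_h$ is at most $1-p_h$, and telescoping yields $(1-p_h)^\ell$ as an upper bound on the overall failure probability. Setting $v=w_i$, and noting that walker $i$'s walk is independent of $W$, this bound holds conditional on any realization of $W$.

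The coupling itself goes as follows. Walker $1$ is defined to follow $W_1$ during its first $T_c/k$ steps and to evolve with fresh independent randomness for its remaining $\ell T_h$ steps. For $i\ge 2$, walker $i$ first evolves via an independent simple random walk from $u_i$ until the stopping time $\tau_i$, the first hitting time of $w_i$; provided $\tau_i\le \ell T_h$, the walker's continuation from time $\tau_i$ up to $\tau_i+T_c/k$ is spliced to be exactly segment $W_i$, and then it continues arbitrarily with fresh randomness until it has taken its full $T_c/k+\ell T_h$ steps. On the event $A$ that $W$ covers $G$, together with $B_i=\{\tau_i\le\ell T_h\}$ for $i=2,\dots,k$, the $k$-walk covers $G$, since every $W_i$ is contained in walker $i$'s trajectory. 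Because the walkers' walks are mutually independent and independent of $W$, conditioning on $W$ gives $\Pr[B_2\cap\cdots\cap B_k\mid W]\ge(1-(1-p_h)^\ell)^{k-1}\ge 1-k(1-p_h)^\ell$, and combining with $\Pr[A]\ge p_c$ yields the stated bound $p_c(1-k(1-p_h)^\ell)$.

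The main subtlety, which I expect to require the most care, is verifying that the spliced process at each walker $i$ is genuinely distributed as a simple random walk of length $T_c/k+\ell T_h$ starting at $u_i$. This amounts to invoking the strong Markov property at the stopping time $\tau_i$ and observing that $W_i$, conditioned on its starting vertex $w_i$, is distributed as an ordinary simple random walk from $w_i$ of length $T_c/k$, so that splicing it onto the walker's history at $\tau_i$ and then appending fresh randomness produces a legitimate Markovian trajectory with the correct marginal distribution. Once this is nailed down, the coverage claim and the probability calculation above assemble into the lemma.
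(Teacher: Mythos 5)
Your proposal is correct, and the underlying decomposition is exactly the paper's: walker $i$'s budget of $T_c/k+\ell T_h$ steps is split into a hitting phase that reaches the start of the $i$-th chunk of a single covering walk, followed by that chunk itself, and the amplification of $p_h$ to $1-(1-p_h)^\ell$ by chaining $\ell$ blocks of length $T_h$ is the same in both arguments. Where you differ is in the formalization. The paper works entirely inside the probability space of the $k$-walk: it enumerates the disjoint events ``$X_1$ begins with $\vec{c}_{k,1}$ and each $X_i$ begins with a first-hitting path to $c_{(i-1)T_c/k}$ followed by $\vec{c}_{k,i}$,'' sums their probabilities, and factors the sum using the concatenation identity $\Pr[\vec{c}\sqsubseteq X\ \&\ \vec{d}\sqsubseteq Y]=\Pr[\vec{c}\circ\vec{d}\sqsubseteq X]$ together with independence of the $k$ walks; no coupling or strong Markov property is needed. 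You instead build the $k$-walk from the covering walk $W$ and auxiliary walks by splicing. That route works, but the subtlety you flag at the end is stated too weakly: it is not enough that each spliced $Y_i$ individually has the law of a simple random walk from $u_i$ --- you need the tuple $(Y_1,\dots,Y_k)$ to be \emph{jointly} distributed as $k$ independent walks, since the target $w_i$ of walker $i$ is a function of the segments $W_1,\dots,W_{i-1}$ already embedded in the earlier walkers' trajectories. This does hold, and the clean way to see it is sequential: conditioning on $W_1,\dots,W_{i-1}$ and on the randomness of walkers $1,\dots,i-1$ fixes $w_i$, the strong Markov property of $Z_i$ at $\tau_i$ together with the Markov property of $W$ shows the conditional law of $Y_i$ is that of a simple random walk from $u_i$ independent of the conditioning, and induction gives joint independence. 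With that verification spelled out, your coupling proof is complete; the paper's event-decomposition buys you freedom from this issue at the cost of heavier notation.
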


Next, we use the following bound on the concentration of the cover time
by Aldous \cite{aldous91threshold}:
\begin{thm}[\cite{aldous91threshold}]\label{thm:aldous}
For the simple random walk on $G$, starting at $i$, if $C_i/h_{\max} \to \infty$ then \\ $\tau_i/C_i \xrightarrow{p} 1$.
\end{thm}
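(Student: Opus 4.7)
The plan is to prove concentration by tracking the expected number of unvisited vertices and controlling both tails separately: the upper tail follows from a first-moment bound, while the lower tail requires a second-moment (Paley-Zygmund) argument that crucially uses the gap hypothesis $C_i/h_{\max}\to\infty$.

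First I would establish a uniform sub-exponential tail for hitting times: for any $u,v$ and any positive integer $\lambda$,
\[
\Pr_u[T_v > \lambda\,e\,h_{\max}] \le e^{-\lambda}.
\]
The proof is the standard block decomposition---cut the walk into blocks of length $e\,h_{\max}$, apply Markov's inequality to each block using the strong Markov property together with $E_w[T_v]\le h_{\max}$ for every intermediate state $w$. An immediate conditional consequence is that for any $s,t$, $\Pr_i[T_v>s+t \mid T_v>s] \le \max_w \Pr_w[T_v>t] \le e^{-\Omega(t/h_{\max})}$.

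Next I would introduce the central object: $M_s$, the set of vertices not yet hit by time $s$, and its mean $\mu(s)=E[|M_s|]=\sum_v \Pr_i[T_v>s]$. The conditional tail from Step 1 gives $\mu(s+t)\le \mu(s)\cdot e^{-\Omega(t/h_{\max})}$, so $\mu$ decays exponentially on the scale $h_{\max}$. Define $s^{*}$ to be the smallest $s$ with $\mu(s)\le 1$. Using $\Pr[\tau_i>s]\le \min(1,\mu(s))$ and summing, $C_i = \sum_s \Pr[\tau_i>s] = s^{*} + O(h_{\max})$; because $C_i/h_{\max}\to\infty$, this already shows $s^{*}=(1+o(1))C_i$, so it suffices to prove $\tau_i/s^{*}\xrightarrow{p}1$.

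For the upper tail, take $s=(1+\epsilon)C_i$, so $s-s^{*}$ is eventually an arbitrarily large multiple of $h_{\max}$; hence $\mu(s)\to 0$ and Markov's inequality gives $\Pr[\tau_i>s]\le\mu(s)\to 0$. For the lower tail, take $s=(1-\epsilon)C_i$, which lies below $s^{*}$ by an amount $\omega(h_{\max})$, so $\mu(s)\to\infty$. I would apply Paley-Zygmund:
\[
\Pr[|M_s|>0]\;\ge\;\frac{\mu(s)^{2}}{E[|M_s|^{2}]},\qquad E[|M_s|^{2}]=\sum_{u,v}\Pr_i[T_u>s,\,T_v>s],
\]
and aim to show $E[|M_s|^{2}]=(1+o(1))\,\mu(s)^{2}$.

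The main obstacle, and the heart of the proof, is this joint tail estimate. To bound $\Pr_i[T_u>s,T_v>s]$, I would split the walk at an intermediate time $s'=s-\omega(h_{\max})$ (still satisfying $s-s'=o(s)$ thanks to $s/h_{\max}\to\infty$). On the event $\{T_v>s\}$, applying the strong Markov property at time $s'$ and using $\Pr_{X_{s'}}[T_u>s-s']\le e^{-\Omega((s-s')/h_{\max})}$ gives
\[
\Pr_i[T_u>s,T_v>s]\;\le\;\Pr_i[T_v>s]\cdot\bigl(\Pr_i[T_u>s']+e^{-\Omega((s-s')/h_{\max})}\bigr),
\]
and choosing $s-s'$ appropriately the correction is $o(1)$ times $\Pr_i[T_u>s]$; a symmetric argument plus the diagonal contribution $\mu(s)=o(\mu(s)^{2})$ yields $E[|M_s|^{2}]\le(1+o(1))\mu(s)^{2}$. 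The delicate point is that the above decoupling is only valid because the hypothesis $C_i/h_{\max}\to\infty$ gives us enough room to pick a slack $s-s'$ that is simultaneously negligible compared to $s$ and large compared to $h_{\max}$; without this gap, the correlations cannot be tamed and the concentration can genuinely fail (as the cycle example in Section~\ref{s-line} shows).
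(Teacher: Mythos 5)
First, a point of order: the paper does not prove this statement. Theorem~\ref{thm:aldous} is quoted from Aldous \cite{aldous91threshold} and used as a black box in the proof of Theorem~\ref{t-kspeed}, so your proposal can only be measured against the known proof of Aldous's theorem, not against anything in this paper. Your overall architecture is indeed the right one and matches Aldous's: exponential tails for hitting times on the scale $h_{\max}$, a first-moment bound for the upper tail of $\tau_i$, and a second-moment (Paley--Zygmund) bound on the number of unvisited vertices for the lower tail. The upper-tail half of your argument is complete and correct.

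The lower tail, however---which you rightly call the heart of the proof---has a genuine gap exactly at the decoupling inequality. Conditioning at the deterministic time $s'$ gives
\[
\Pr_i[T_u>s,\,T_v>s]\;\le\;E_i\Bigl[\mathbf{1}_{\{T_u>s'\}}\cdot \Pr_{X_{s'}}[T_v>s-s']\Bigr]\;\le\;\Pr_i[T_u>s']\cdot e^{-\Omega((s-s')/h_{\max})},
\]
but it does not yield the inequality you wrote, in which $\Pr_i[T_v>s]$ factors out multiplicatively; the two events are in general positively correlated, so such a factorization has to be earned, and I do not see how the Markov property at $s'$ produces it. Moreover, even granting your inequality, the plan does not close: (i) you need the additive correction $e^{-\Omega((s-s')/h_{\max})}$ to be $o(\Pr_i[T_u>s])$ \emph{uniformly in $u$}, but for a vertex $u$ that the walk hits quickly (i.e., $\max_w E_w[T_u]\ll h_{\max}$) the probability $\Pr_i[T_u>s]$ decays far faster than $e^{-s/(\e h_{\max})}$, so no admissible choice of $s-s'<s$ works for all $u$ at once; and (ii) after summing over $u,v$ the main term is $\mu(s)\mu(s')$ rather than $\mu(s)^2$, and since you must take $s-s'=\omega(h_{\max})$ to kill the error term, the ratio $\mu(s')/\mu(s)$ can be unbounded (recall $\mu$ may decay by a constant factor every $\Theta(h_{\max})$ steps). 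A related symptom appears earlier: the first-moment computation only gives $C_i\le s^*+O(h_{\max})$, while the reverse inequality $C_i\ge(1-o(1))s^*$ is itself a consequence of the lower-tail concentration, so the assertion $s^*=(1+o(1))C_i$ is circular where you place it (this part is reorganizable, but only once the lower tail is actually proved). Repairing the argument requires Aldous's genuine correlation lemma---or a substitute, e.g., running the second moment only over the subset of vertices that dominate $\mu(s)$ and decoupling their hitting times more carefully---and that is precisely the technical content of \cite{aldous91threshold} that the sketch elides.
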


Equipped with the proper tools we are ready to prove Theorem~\ref{t-kspeed}.

\begin{proofof}{Theorem~\ref{t-kspeed}}
If the conditions of Theorem~\ref{thm:aldous} do not hold then the cover time and hitting time are on the same order and Theorem~\ref{t-kspeed} gives a trivial
(not tight) upper bound. Assume the conditions of Theorem~\ref{thm:aldous} holds. Theorem~\ref{thm:aldous} implies that $\Pr[\tau_u/C_u > 1+\delta_n] \le \epsilon_n$
where $\delta_n,\epsilon_n \rightarrow 0$ as the size of the graph goes to infinity.
Thus $\Pr[$a random walk of length $(1+o(1))\cdot C$ covers $G]\ge 1-o(1)$.
By Markov bound, for a fixed vertex $v$ of the graph, $\Pr[$a random walk
of length $2h_{\max}$ visits vertex $v]\ge 1/2$. If we set $\ell=\log k + \omega(1)$,
then Lemma~\ref{l-composition} implies that a random $k$-walk of length
$L={(1+o(1)) C \over k}  + (\log k +\omega(1))2h_{\max}$ covers $G$ with probability at least
$(1-o(1))\cdot (1-k2^{-\ell}) = (1-o(1))\cdot\left(1-{1\over \omega(1)}\right) =  1-o(1)$.
Here each of the $k$ random walks may start at a different vertex.
Thus a walk of length $i\cdot L$ does not cover $G$ with probability at most $[o(1)]^i$ so the cover time of $G$ can be bounded by 
$L \sum_i i \cdot [o(1)]^i = L \cdot \frac{1}{1 - o(1)}= L \cdot (1+o(1))$.
\end{proofof}

%
%

\subsection{Linear speed-up on expanders}\label{sec:expanders}
In this section we prove that for the important
special case of expanders,
there is a linear speed-up for $k$ as large as $k \leq n$:

\begin{thm}\label{thm:expanders}
If $G$ is an expander, then the speed-up $S^k(G)=\Omega(k)$ for $k \le n$.
\end{thm}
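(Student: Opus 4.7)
The plan is to show directly that $C^k(G) = O((n \log n)/k)$ for all $k \le n$. Since an expander has $C(G) = \Theta(n \log n)$, this immediately gives the speed-up $S^k(G) = \Omega(k)$. Note that Theorem~\ref{thm:generalMatthew} already handles $k \le \log n$, but the bound there is $O((n \log n)/k) + O(h_{\max}) = O(n \log n/k) + O(n)$, which stops decreasing once $k$ exceeds $\log n$; so a different argument, tailored to the spectral structure of an expander, is needed in the range $\log n \le k \le n$.

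The heart of the argument will be a local hitting-probability estimate: for any target vertex $v$, any starting vertex $u$, and any $T$ with $t_m \le T \le n$ (where $t_m = O(\log n)$ is the mixing time), a single random walk of length $T$ from $u$ visits $v$ with probability at least $c\,T/n$ for an absolute constant $c>0$. I would prove this by the second moment method applied to the visit count $N_v = \sum_{t=1}^T \mathbf{1}[X(t) = v]$. After a $t_m$ burn-in the walk is within a small constant of the uniform stationary distribution in total variation, so $E[N_v] = \Omega(T/n)$. The constant spectral gap of an expander forces $|p^t_{v,v} - 1/n| \le \rho^t$ for some fixed $\rho < 1$; summing the resulting geometric series gives $\mathrm{Var}(N_v) = O(T/n)$, hence $E[N_v^2] = O(T/n) + (T/n)^2$. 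Paley--Zygmund then yields $\Pr[N_v \ge 1] \ge E[N_v]^2/E[N_v^2] = \Omega(T/n)$ throughout the range $T \le n$.

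Given this hitting estimate, I would set $T = c' (n \log n)/k$ for a sufficiently large constant $c'$; the constraint $T \ge t_m$ is met precisely because $k \le n$. Since the $k$ walks are independent, the probability that a fixed vertex $v$ is missed by all of them within $T$ steps is at most $(1 - cT/n)^k \le \exp(-c c' \log n) \le 1/n^2$ for $c'$ large enough. A union bound over the $n$ vertices shows that with probability at least $1 - 1/n$ the $k$ walks jointly cover $G$ by time $T$. Consecutive blocks of length $T$ form independent trials by the Markov property, so the cover time has a geometric tail and $C^k(G) \le T\,(1 + o(1)) = O((n \log n)/k)$, which completes the argument.

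The principal obstacle will be the variance estimate in the local hitting lemma: the spectral gap has to be used carefully to control the correlations $\Pr[X(s) = X(t) = v]$ uniformly in $s$ and $t$ so that the total variance of $N_v$ stays on the order of its mean throughout the regime $T \le n$. Handling the non-stationary start (by either absorbing a $t_m$ burn-in into the length of $T$, or by working with the joint distribution $(X(s),X(t))$ conditioned on the initial vertex) is the main care point. Once the variance bound is in place, the rest is a routine union bound plus a standard geometric-tail argument to pass from high-probability coverage to an expectation bound on $C^k(G)$.
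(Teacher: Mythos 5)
Your proposal is correct and follows essentially the same route as the paper: the paper's Lemma~\ref{l91} is exactly your second-moment (Paley--Zygmund/Cauchy--Schwarz) hitting estimate, proved from the spectral bound $\|A^i z_2\|\le(\lambda/d)^i$ to control both the mean and the pairwise correlations of the visit count, and Corollary~\ref{c92} then runs the same independence-across-walks, union-bound, and geometric-tail argument to get $C^k(G)=O(n\log n/k)$. The only structural difference is that the paper applies the second moment to short windows of length $2s=\Theta(\log n)$ and amplifies by splitting each length-$t$ walk into sub-walks, which automatically covers the regime where your $T=c'(n\log n)/k$ exceeds $n$ and so needs a small extra block-splitting step in your version.
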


An $(n,d, \lambda)$-graph is a $d$-regular graph $G$ on $n$ vertices
so that the absolute value of every nontrivial eigenvalue of the adjacency
matrix of $G$ is at most $\lambda$.
It is well known (see \cite{alon86eigen}) that a  $d$-regular graph
on $n$ vertices
(with a loop in every vertex) is an expander (that is, any set $X$ of at most
half the
vertices has at least $c|X|$ neighbors outside the set, where  $c>0$ is bounded away
from zero),
if and only if there is a fixed $\lambda$ bounded away from $d$ so that
$G$ is an $(n,d,\lambda)$-graph. Since the rate of convergence of a random walk to a
uniform distribution is determined by the spectral properties of the graph it will
be convenient to use this equivalence and prove that random walks on
$(n,d,\lambda)$-graphs,  where $\lambda$ is bounded away from $d$, achieve linear speed
up. In what follows we make no attempt to optimize the absolute constants,
and omit all floor and ceiling signs whenever these are not crucial.
All logarithms are in the natural basis $e$ unless otherwise specified.

\begin{lemma}
\label{l91}
Let $G$ be an $(n,d,\lambda)$-graph. Put $s=\frac{\log (2n)}{\log (d/\lambda)}$
and $b=\frac{\lambda}{d-\lambda}$.
Then, for every two vertices $u,v$ of $G$, the probability that a random walk of length
$2s$ starting
at $u$, covers $v$ is at least $\frac{s}{2n+4s+4bn}$.
\end{lemma}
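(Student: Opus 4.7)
The plan is to apply a first-moment / conditional-expectation argument: letting $T$ denote the number of times the walk visits $v$ during the $2s$ steps, the identity $E[T] = \Pr[T \ge 1]\cdot E[T\mid T\ge 1]$ reduces the problem to a lower bound on $E[T]$ and an upper bound on $E[T\mid T\ge 1]$. Both quantities are controlled by the standard spectral estimate for random walks on $(n,d,\lambda)$-graphs.

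The first step is the spectral bound: expanding the starting indicator in the orthonormal eigenbasis of the symmetric transition matrix and using that every non-principal eigenvalue has absolute value at most $\lambda/d$, one obtains
$$\left\lvert p^t_{u,w} - \frac{1}{n}\right\rvert \le \left(\frac{\lambda}{d}\right)^t$$
for every pair $u,w$ and every $t\ge 0$. The choice $s = \log(2n)/\log(d/\lambda)$ is designed precisely so that $(\lambda/d)^s = 1/(2n)$; consequently $1/(2n) \le p^t_{u,v} \le 3/(2n)$ for every $t \ge s$.

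For the first moment, set $T = \sum_{t=0}^{2s} \mathbf{1}[X_t=v]$; summing the lower bound above over $t\in\{s,s+1,\dots,2s\}$ gives $E[T]\ge (s+1)/(2n)$. For the conditional expectation, if $\tau$ is the first visit to $v$, then the strong Markov property yields $E[T\mid T\ge 1,\tau=t_0]=\sum_{j=0}^{2s-t_0}p^j_{v,v}$. Bounding this by $1+\sum_{j=1}^{2s}p^j_{v,v}$ and inserting the spectral bound into each $p^j_{v,v}$ produces the geometric series $\sum_{j=1}^{\infty}(\lambda/d)^j \le \lambda/(d-\lambda)=b$ along with a $1/n$ term from each summand, giving $E[T\mid T\ge 1]\le 1 + 2s/n + b$. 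This is exactly where the parameter $b$ enters the final denominator.

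Plugging these two estimates into $\Pr[T\ge 1]\ge E[T]/E[T\mid T\ge 1]$ yields a bound of the shape $(s+1)/(2n+4s+2bn)$, which one then checks is at least $s/(2n+4s+4bn)$. The main obstacle is not conceptual but arithmetic: making sure the geometric-series truncation, the counting of indices in the window, and the manipulations around $s$ vs.\ $s+1$ combine to deliver the particular constants $(2,4,4)$ in the claimed denominator, rather than the somewhat looser constants a direct Paley--Zygmund computation would produce. The essence, however, is the interplay between the rapid mixing (captured by $s$ and the spectral estimate) and the control on the recurrence of $v$ (captured by $b$).
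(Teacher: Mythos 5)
Your proof is correct, and it rests on the same two pillars as the paper's: the spectral estimate $\lvert p^t_{u,w}-1/n\rvert\le(\lambda/d)^t$ with $s$ chosen so that $(\lambda/d)^s=1/(2n)$, and a bound on the "clustering" of visits to $v$ controlled by the geometric series $\sum_j(\lambda/d)^j\le b$. Where you differ is the final inequality: the paper restricts attention to the visits $Y=\sum_{i=s+1}^{2s}Y_i$ in the second half of the walk, computes $E(Y^2)\le E(Y)[1+2s/n+2b]$ by conditioning on the earlier of two visits, and applies the Cauchy--Schwarz (Paley--Zygmund) bound $\Pr[Y>0]\ge(EY)^2/E(Y^2)$; you instead use the first-visit decomposition $\Pr[T\ge1]=E[T]/E[T\mid T\ge1]$ together with the strong Markov property at the first hitting time of $v$. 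The two computations are nearly the same under the hood (the paper's bound on $E(Y_iY_j)$ is also a conditioning-on-the-first-visit argument), but your route avoids the second moment entirely and, because the conditional expectation counts each later visit once rather than as an ordered pair, it yields $E[T\mid T\ge1]\le 1+2s/n+b$ rather than the paper's $1+2s/n+2b$, hence the slightly stronger bound $\frac{s+1}{2n+4s+2bn}\ge\frac{s}{2n+4s+4bn}$. The only detail worth making explicit in a write-up is that $E[T\mid\tau=t_0]=\sum_{j=0}^{2s-t_0}p^j_{v,v}$ is bounded uniformly over $t_0$, so the maximum over $t_0$ can be pulled out of the average; with that said, your argument is complete.
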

\begin{proof}
For each $i$, $s<i \leq 2s$, let $Y_i$ be the indicator random variable whose value
is $1$ iff the walk starting at $u$ visits $v$ at step number $i$. Let
$Y=\sum_{i=s+1}^{2s} Y_i$ be the number of times the walk visits $v$ during its last $s$
steps. Our objective is to show that the probability that $Y$ is positive is at least
$\frac{s}{2n+4s+4bn}$. To do so, we estimate the expectation of $Y$ and of $Y^2$
and use the fact that by Cauchy-Schwartz
\begin{equation}
\label{e91}
\Pr[Y>0]=\sum_{j>0} \Pr[Y=j] \geq
\frac{(\sum_{j>0} j \Pr[Y=j])^2}{ \sum_{j>0} j^2 \Pr[Y=j]}
=\frac{(E(Y))^2}{E(Y^2)}
\end{equation}

By linearity of expectation $E(Y)=\sum_{i=s+1}^{2s} E(Y_i)$. The expectation of $Y_i$
is the probability the walk visits $v$ at step $i$. This is precisely the value of the
coordinate corresponding to $v$ in the vector $A^i z$, where $A$ is the stochastic
matrix of the random walk, that is the adjacency matrix of $G$ divided by $d$,
and $z$ is the vector with $1$ in the coordinate $u$ and $0$ in every other coordinate.
Writing $z$ as a sum of the constant $1/n$-vector $z_1$ and a vector $z_2$
whose sum of coordinates
is $0$, and using the fact that $Az_1=z_1$ and that the $\ell_2$-norm of $A^iz_2$
satisfies $|| A^i z_2 ||  \leq (\frac{\lambda}{d})^i$ we conclude, by the
definition of $s$, that each coordinate of $A^iz$ deviates from $1/n$ by at most
$\frac{1}{2n}$.

It thus follows that
\begin{equation}
\label{e92}
E(Y) \geq \frac{s}{2n}.
\end{equation}
By linearity of expectation
$$
E(Y^2)=\sum_{i=s+1}^{2s} E(Y_i) + 2\sum_{s <i <j \leq 2s} E(Y_i Y_j)
$$
Note that $E(Y_iY_j)$ is precisely the probability that the walk visits $v$ at step
$i$ and  at step $j$. This is the probability that it visits $v$ at step $i$, times the
conditional probability that it visits $v$ at step  $j$ given that it visits it at step
$i$. This conditional probability can be estimated as before, showing that it deviates
from $1/n$ by at most $(\lambda/d)^{j-i}$. It thus follows that
\begin{equation}
\label{e93}
E(Y^2) \leq E(Y) +2 \sum_{i=s+1}^{2s} E(Y_i)(\frac{s}{n}+\sum_{r>0} (\lambda/d)^r)
\leq E(Y) [1+\frac{2s}{n}+2\frac{\lambda}{d-\lambda}].
\end{equation}
Plugging the estimates (\ref{e92}) and (\ref{e93}) in (\ref{e91}) we conclude that
$$
\Pr[Y>0] \geq \frac{(E(Y))^2}{E(Y)[1+2s/n+2\lambda/(d-\lambda)]}
\geq \frac{s/(2n)}{1+2s/n+2b}=\frac{s}{2n+4s+4bn}.
$$
This completes the proof.
\end{proof}
\begin{cor}
\label{c92}
Let $G$ be an $(n,d,\lambda)$-graph and define $s=\frac{\log (2n)}{\log(d/\lambda)}$,
$b=\frac{\lambda}{d-\lambda}$. Suppose $n \geq 2s$, and let $k$ be an integer so
that $\frac{16(b+1)n \log n}{k}>2s$. For any two fixed vertices $u$ and $v$ of
$G$, the probability that $v$ is not covered by at least one of $k$ independent random
walks starting at $u$, each of length $t=\frac{16(b+1)n \log n}{k}$, is smaller than
$\frac{1}{n^2}$.
\end{cor}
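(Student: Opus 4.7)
The plan is to use Lemma~\ref{l91} as the basic per-segment success bound and then chain many independent attempts to amplify the probability of hitting $v$ to $1-1/n^2$.

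First I would simplify the bound from Lemma~\ref{l91}. Starting from any vertex $u'$, a random walk of length $2s$ hits $v$ with probability at least $s/(2n+4s+4bn)$. Since $n\geq 2s$, we have $4s\leq 2n$, hence $2n+4s+4bn\leq 4(b+1)n$. So, writing $p^\star=s/(4(b+1)n)$, every length-$2s$ walk in $G$, regardless of its starting vertex, reaches $v$ with probability at least $p^\star$.

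Next I would split each of the $k$ walks of length $t=16(b+1)n\log n/k$ into $m=\lfloor t/(2s)\rfloor$ consecutive disjoint segments of length $2s$ (the hypothesis $t>2s$ guarantees $m\geq 1$, and I will suppress floors since we only need an inequality). For a single walk, let $A_i$ denote the event that segment $i$ fails to visit $v$. By the Markov property applied at the start of segment $i$ and the bound $p^\star$ above,
\begin{equation*}
\Pr[A_i \mid A_1,\dots,A_{i-1}]\;\leq\;1-p^\star,
\end{equation*}
so the probability that a single length-$t$ walk misses $v$ is at most $(1-p^\star)^m$. Since the $k$ walks are independent, the probability that all of them miss $v$ is at most $(1-p^\star)^{km}$.

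Finally I would apply $1-x<e^{-x}$ and plug in the numbers. We have
\begin{equation*}
p^\star km \;\geq\; \frac{s}{4(b+1)n}\cdot k\cdot \frac{t}{2s}
\;=\;\frac{kt}{8(b+1)n}\;=\;\frac{16(b+1)n\log n}{8(b+1)n}\;=\;2\log n,
\end{equation*}
whence $(1-p^\star)^{km}<e^{-p^\star km}\leq e^{-2\log n}=1/n^2$, which is the desired bound. There is no real obstacle here beyond bookkeeping; the only point that requires a moment of care is the conditional application of Lemma~\ref{l91} to successive segments, which is valid precisely because the lemma holds uniformly over the starting vertex of each segment.
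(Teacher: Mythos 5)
Your proof is correct and follows essentially the same route as the paper's: split each walk into length-$2s$ segments, apply Lemma~\ref{l91} to each segment conditionally on the past (valid since the lemma is uniform in the starting vertex), and amplify over all $kt/(2s)$ segments to get $e^{-kt/(8(b+1)n)}=1/n^2$. The only difference is cosmetic — you make explicit the use of $n\geq 2s$ to simplify the denominator and flag the floor issue, both of which the paper handles implicitly.
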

\begin{proof}
Break each of the walks into $\frac{t}{2s} $ sub-walks,
each of length $2s$. By Lemma~\ref{l91}, for each of these sub-walks, the probability
it covers $v$ is at least $\frac{s}{2n+4s+4bn}\geq \frac{s}{4(b+1)n}.$ Note that
this estimate holds for each specific sub-walk, even after we expose all previous
sub-walks, as given this information it is still a random walk of length $2s$ starting
at some vertex of $G$, and this initial vertex is known once the previous sub-walks are
exposed. It follows that the probability that $v$ is not covered
is at most
$$
(1-\frac{s}{4(b+1)n})^{kt/2s} < e^{-kt/(8(b+1)n}=e^{-2 \log n}=\frac{1}{n^2},
$$
as needed.
\end{proof}
In the notation of the above corollary, the $k$ random walks of length $t$
starting at $u$ cover the
whole expander with probability at least $1-1/n$. Since the usual cover time of the
expander is $O( n \log n)$ it follows that the expected length of the walks
until they cover the graph does not exceed $t+\frac{1}{n} O( n \log n) \leq O(t)$.

Note that for every fixed $b$,
the total length of all $k$ walks in the last corollary is $O( n \log n)$, and that the
assumption $\frac{16(b+1)n \log n}{k}>2s=2\frac{\log (2n)}{\log(d/\lambda)}$ holds
for every $k$ which does not exceed $b'n$ for some absolute constant $b'$ depending
only on $b$ (as $d/\lambda=1+1/b$).
This shows that $k$ random walks on
$n$-vertex expanders achieve speed-up $\Omega(k)$ for all $k \leq n$.

\section{Speed-up and Mixing Time}
Random walks on expanders converge rapidly to the stationary distribution.
For graphs with fast mixing times, like expanders,
the following theorem gives a second bound on the
speed-up in terms of mixing time.
\\ \\ 
\noindent \textbf{Theorem~\ref{mixing theorem}}
\emph{
Let $G$ be a $d$-regular graph. If the mixing time of $G$ is $t_m$ then for $k \le n$ the speed-up is $S^k = \Omega(\frac{k}{t_m \ln n})$}
\\ \\ 
\begin{proof}
Let $G$ be a $d$-regular graph of size $n$. We show that the expected
cover time of $G$ by a random $k$-walk is $O({ t_m n \ln^2 n\over k})$.
As a cover time of any graph is at least $n\ln n$ the theorem follows.

In this proof we represent a random $k$-walk on $G$ by an infinite
sequence of random variables $X_0,X_1,\dots$, where $X_i$ is the position
of the $1+(i \mod k)$-th token at step $\lfloor i/k \rfloor +1$. Define
the random variables $Y_i=X_{\lfloor i/k \rfloor k \cdot 6 t_m \ln n +
(i\mod k)}$. Hence, $Y_i$'s correspond to the position of the $k$-walk
after every $6 t_m \ln n$ steps. Let a random variable $Y'_i$ be $Y_i$
conditioned on a specific outcome of $Y_0,\dots,Y_{i-k}$. Since $t_m$ is
the mixing time of $G$ and the stationary distribution of a random walk on
$G$ is uniform ($G$ is $d$-regular), the statistical distance of $Y'_i$
from the uniform distribution on $G$ is at most $(1/e)^{6 \ln n} \le
1/n^6$. In particular, for any vertex $v$ of $G$, $|\Pr[Y'_i = v ] - 1/n
|\le 1/n^6$.

Thus, for any $1<\ell\le n^3$ and any sequence $v_1,\dots,v_{\ell}$ of
vertices
$$
(1/n-/n^6)^{\ell}\le \Pr[Y'_1Y'_2\cdots Y'_{\ell}=v_1\cdots v_{\ell}]\le
(1/n+1/n^6)^{\ell}
$$
Hence,
$$
1/n^{\ell}\cdot (1-1/n^2) \le \Pr[Y'_1Y'_2\cdots Y'_{\ell}=v_1\cdots
v_{\ell}]\le 1/n^{\ell}\cdot(1 + 2/n^2).
$$

One can easily show (see the proof of Theorem~\ref{thm:t4}) that the probability
that a clique of size $n$ is not covered within $10n\ln n$ steps by a
random $1$-walk is at most $1/n^9$. By the above bound distribution of
$Y'_1Y'_2\cdots Y'_{\ell}$, for $1<\ell\le n^3$ is close to a distribution
of a random walk on a clique. Hence, unless $Y'_1,Y'_2,\dots,Y'_{10n\ln
n}$ does not hit all the vertices of $G$, we can bound the expected cover
time of $G$ by $(6 t_m \ln n) \cdot C^{k}(K_n) \cdot (1+2/n^2)$. If
$Y'_1,Y'_2,\dots,Y'_{10n\ln n}$ does not hit all the vertices of $G$ we
can bound the cover time of $G$ by the trivial bound $O(n^3)$. Since
$C^{k}(K_n) =  O(n \ln n/k)$ the claim follows.
\end{proof}

\section{Logarithmic speed-up}
\label{s-line}


So far we have seen only cases where the speed-up in cover time achieved by multiple
random walks is considerable, i.e., at least linear. In this section we show that this is not always the
case and that the speed-up may be as low as logarithmic in $k$.
The cover time of a cycle $L_n$ on $n$ vertices is $\Theta(n^2)$.
We prove the following claim.
\\ \\
\textbf{Theorem~\ref{thm:cycle}}
\emph{
For any integer $n$ and $k<\e^{n/4}$, the speed-up on the cycle with $n$ vertices
is $S^k(L_n) = \Theta(\log k)$.}
\\ \\
Hence for a cycle even a moderate speed-up of $\omega(\log n)$ requires
super-polynomially many walks, and to achieve speed-up of $n^\epsilon$ one
requires $2^{\Omega(n^\epsilon)}$ walks. The theorem follows from the following
two lemmas.

\begin{lemma}\label{l-24} Let $s>1$ and $k\ge 1$ be such that $C^{k}\le n^{2}/s$ for a cycle of length
$n$. Then $k\ge \e^{s/16}/8$.
\end{lemma}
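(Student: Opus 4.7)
The plan is to show that fast covering of $L_n$ by a $k$-walk forces at least one individual walk to travel unusually far from the common starting vertex, and then use the sub-Gaussian tails of a one-dimensional simple random walk to force $k$ to be exponential in $s$.

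First I would combine the hypothesis $C^k \le n^2/s$ with Markov's inequality on the nonnegative random variable $\tau^k$: setting $T = 2n^2/s$ gives $\Pr[\tau^k \le T] \ge 1/2$, so it suffices to upper-bound the probability that the $k$ walks have covered $L_n$ by time $T$. Next I would lift the $k$ walks on $L_n$, all started at a common vertex identified with $0$, to independent simple random walks $X_1,\dots,X_k$ on $\mathbb{Z}$ started at $0$, so that each cycle walk is the reduction modulo $n$ of its lift. The vertex of $L_n$ antipodal to the start lies at graph-distance $\lfloor n/2\rfloor$, and it can be visited by walk $j$ only if $|X_j(t)|\ge \lfloor n/2\rfloor$ for some $t\le T$. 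Hence
$$\Pr[\tau^k \le T] \;\le\; \Pr\!\left[\max_{1\le j\le k}\max_{t\le T} |X_j(t)|\ge \lfloor n/2\rfloor \right].$$

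Third, I would apply the reflection principle together with Hoeffding's inequality to control a single walk's maximum displacement,
$$\Pr\!\left[\max_{t\le T}|X(t)|\ge m\right] \;\le\; 4\,\Pr[X(T)\ge m] \;\le\; 4\exp\!\left(-m^2/(2T)\right),$$
and plug in $m = n/2$ and $T = 2n^2/s$ to get $4\e^{-s/16}$ per walk. A union bound over the $k$ walks yields $4k\e^{-s/16}$ on the covering probability, and combining with the Markov lower bound gives $1/2 \le 4k\e^{-s/16}$, i.e., $k \ge \e^{s/16}/8$, as required.

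The main obstacle is conceptual rather than technical: singling out the right necessary event (reaching the antipodal vertex) and transferring it from the cyclic state space to $\mathbb{Z}$ via the lift. Once this reduction is in place, the remainder is a routine one-dimensional random-walk tail estimate. The only minor subtlety is parity: for odd $n$ one has $\lfloor n/2\rfloor = (n-1)/2$, so the exponent degrades by a factor $(1-1/n)^2$; this costs only lower-order terms that can be absorbed into constants, or handled by noting that the conclusion $k \ge \e^{s/16}/8$ is non-trivial only when $s$ is already at least a modest constant.
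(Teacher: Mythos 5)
Your proposal is correct and follows essentially the same route as the paper's proof: Markov's inequality to get covering probability at least $1/2$ at time $2n^2/s$, the observation that covering forces some walk to reach the antipodal vertex, a reflection-principle reduction to the endpoint displacement (the paper phrases this as "given the walk reached displacement $n/2$ at some time $t$, with probability $1/2$ it still has that displacement at the end"), a Chernoff/Hoeffding tail bound giving $4\e^{-s/16}$ per walk, and a union bound over the $k$ walks. The constants and conclusion match exactly.
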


\begin{proof}
Assume that $C^{k}\le n^{2}/s$ and we will prove that $k \ge
\e^{s/16}/8$. Pick an arbitrary vertex $v$ of the graph. Clearly, the cover time starting from the
vertex $v$ is $C^{k}_v \le n^{2}/s$. Let a random variable $T_v$
be the cover time of a random $k$-walk starting from $v$. By
Markov inequality, $\Pr[T_v \ge 2n^{2}/s]\le 1/2$. Hence,
with probability at least $1/2$ one of the $k$ walks reaches the
vertex $v_{n/2}$ that is at distance $n/2$ from $v$ in at most
$2n^{2}/s$ steps.
For a single walk, if it reaches $v_{n/2}$ starting from $v$ in time
at most $2n^{2}/s$, then there is $1\le t \le 2n^{2}/s$ so
that the number of its steps to the right until time $t$ differs from the number
of its steps to the left by at least $n/2$. Given that this happens,
with probability $1/2$ the number of steps to the right
will differ from the number of steps to the left by at least $n/2$
also at time $2n^{2}/s$. This is because after time $t$ we will
increase the difference with the same probability as that we will
decrease it since the probability of going to the left is the same as the probability
of going to the right.
By Chernoff bound, $\Pr[$the number of steps to the left and to the right of a walk
differs by at least $n/2$ at time $2n^{2}/s] \le 2e^{-{s \cdot n^2\over
16 n^{2}}} \le 2e^{-s/16}$. Hence, the probability that
a particular walk reaches the vertex $v_{n/2}$ during $2n^{2}/s$ steps
is at most $4e^{-s/16}$.

Thus, $\Pr[$there exists a walk that reaches $v_{n/2}$ in time
at most $2 n^{2}/s]\le 4k  \cdot
\e^{-s/16}$. Since this probability must be at least $1/2$ we
conclude that ${\e^{s/16}\over 8} \le k$.
\end{proof}

\begin{lemma}\label{l-cycleub}
Let $k$ be large enough and $n$ be an integer. If $k\le \e^{n/4}$ then $C^k \le 2n^2/\ln k$ for a cycle of length $n$.
\end{lemma}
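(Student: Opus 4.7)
The plan is to show directly that $k$ independent walks cover the cycle in time $t:=(2-\delta)n^2/\ln k$ with probability $1-o(1)$, for any fixed $\delta>0$, and then add a trivial deterministic fallback to pass from high probability to expectation. Lift each of the $k$ walks to the universal cover $\mathbb{Z}$, so that $S^{(i)}$ is a simple $\pm 1$ walk on $\mathbb{Z}$ starting from $0$. Since a $\pm 1$ walk passes through every integer between its start and any later value, if some walk satisfies $S^{(i)}_t\ge n/2$ then it visits every cycle vertex in the right half-arc $\{0,1,\dots,\lceil n/2\rceil\}$, and symmetrically if some $S^{(j)}_t\le -n/2$ then walk $j$ covers the complementary left half-arc; together these arcs are all of $\mathbb{Z}/n\mathbb{Z}$. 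So it is enough to show that both events $\max_i S^{(i)}_t\ge n/2$ and $\min_i S^{(i)}_t\le -n/2$ occur with probability $1-o(1)$.

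For a single walk, a Chernoff bound (equivalently a local CLT/Stirling estimate) based on the large-deviation rate $I(\alpha)=\tfrac12\alpha^2(1+O(\alpha^2))$ for simple $\pm 1$ walks gives
\[
p\;:=\;\Pr[S_t\ge n/2]\;\ge\;\frac{c}{\sqrt{\ln k}}\exp\!\left(-\frac{(n/2)^2}{2t}\right)\;=\;\frac{c}{\sqrt{\ln k}}\,k^{-1/(16-8\delta)}.
\]
This estimate is valid throughout $k\le\e^{n/4}$: even at the extreme $k=\e^{n/4}$, one has $t=\Theta(n)$ and the ratio $(n/2)/t=\Theta(1)$, so the deviation sits in the Cramér regime where $I(\alpha)$ agrees with the Gaussian exponent up to lower-order corrections. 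The number of walks reaching height $n/2$ is dominated below by a $\mathrm{Binomial}(k,p)$ variable with mean $kp=\Omega(k^{1-1/(16-8\delta)}/\sqrt{\ln k})\to\infty$, so a Chernoff bound yields $\Pr[\max_i S^{(i)}_t<n/2]\le\exp(-\Omega(kp))=o(1)$. By symmetry and a union bound, with probability $1-o(1)$ both half-arcs are covered, hence the whole cycle is covered by time $t$.

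To finish, combine the high-probability cover with the deterministic upper bound $\tau^k\le\tau^1=O(n^2)$ coming from a single walk on $L_n$. Then
\[
E[\tau^k]\;\le\;t\;+\;\Pr[\text{not covered by }t]\cdot O(n^2)\;\le\;(2-\delta)\frac{n^2}{\ln k}+o\!\left(\frac{n^2}{\ln k}\right),
\]
where the second term is absorbed using that the failure probability decays faster than any polynomial in $k$ while $\ln k\le n/4$. Choosing $\delta$ to tend to $0$ slowly as $n\to\infty$ gives $C^k\le 2n^2/\ln k$ for $k$ large enough.

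The main obstacle is keeping the tail estimate for $\Pr[S_t\ge n/2]$ tight at the upper end of the range, $k$ close to $\e^{n/4}$, where $t=\Theta(n)$ and $n/2$ is a non-infinitesimal fraction of $t$: one must invoke the full Cramér rate rather than a naive Gaussian tail, but since $I(\alpha)=\tfrac12\alpha^2+O(\alpha^4)$, the Gaussian leading exponent is accurate up to a multiplicative factor in the prefactor, which is dominated by $kp\to\infty$. A minor additional point is the parity constraint $S_t\in\{-t,-t+2,\dots,t\}$: this is harmless because we may replace the target $n/2$ by $\lceil n/2\rceil$ or $\lfloor n/2\rfloor+1$ without changing the exponent, and both half-arcs together still cover the cycle.
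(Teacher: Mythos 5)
Your proof is correct, but it takes a genuinely different route from the paper's. The paper requires a \emph{single} walk to wind all the way around the cycle: it runs the $k$-walk for $\ell=n^2/4(c-1)^2$ steps with $c=\sqrt{\ln k}/2$, uses an explicit binomial-coefficient lower bound (Proposition~\ref{prp:folk}) to show that one walk makes net displacement $+n$ with probability at least $\e^{-3c^2-4}\ge 1/k$, concludes that $k$ walks succeed in one block with probability $>1-1/\e$, and then iterates blocks geometrically to get $C^k\le \frac{\e}{\e-1}\ell\le 2n^2/\ln k$. You instead ask only for displacement $\pm n/2$, covering the cycle by two half-arcs traced by (possibly) two different walks. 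This halves the required deviation and hence raises the single-walk success probability from $\approx k^{-1}$ to $\approx k^{-1/16}$, so the number of successful walks has diverging mean and the failure probability at time $t$ is $\exp(-\Omega(k^{1-1/16}/\sqrt{\ln k}))$ — super-polynomially small — which lets you skip the block-restarting argument entirely and absorb the failure event into a single $o(\cdot)$ term. Both arguments ultimately rest on the same ingredient, a \emph{lower} bound on the lower tail of a Binomial$(t,1/2)$ in the moderate/large-deviation regime (your Cram\'er/local-CLT estimate plays the role of Proposition~\ref{prp:folk}), and both must handle the boundary case $k\approx\e^{n/4}$ where the deviation is a constant fraction of the walk length; you address this correctly by noting $I(\alpha)=\alpha^2/2+O(\alpha^4)$, though the correction there is a factor $k^{-O(\alpha^2)}$ rather than a constant prefactor — harmless, since $kp\to\infty$ regardless. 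Your approach would in fact yield a better leading constant (roughly $n^2/(8\ln k)$ if optimized) than the paper's $\frac{\e}{\e-1}\cdot\frac{n^2}{\ln k}$, though neither matters for the $\Theta(\log k)$ speed-up. One small imprecision: $\tau^k\le\tau^1=O(n^2)$ is not a deterministic bound, since $\tau^1$ is random; what you need (and clearly intend) is the Markov-property estimate $E[\tau^k\mathbf{1}_{\tau^k>t}]\le \Pr[\tau^k>t]\,(t+\max_v C_v)$ with $\max_v C_v=O(n^2)$ on the cycle, which makes your displayed inequality valid.
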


To prove this lemma we need the following folklore statement (see the appendix for the proof).

\begin{proposition}\label{prp:folk}
Let $c\ge 2$ be a constant. For every even integer $n\ge 16c^2$,
$$
 \e^{-3c^2-4} \le \Pr[(c-1) \sqrt{n} \le X - n/2 \le c \sqrt{n}] \le \e^{-2(c-1)^2},
$$
where $X$ is a sum of $n$ independent $0$-$1$ random variables that are $1$ with probability $1/2$.
\end{proposition}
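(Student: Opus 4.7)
\textbf{Proof proposal for Proposition~\ref{prp:folk}.}
The two inequalities are of very different natures, so I would handle them separately.

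For the upper bound, the plan is to apply Hoeffding's inequality directly. Since $X$ is a sum of $n$ independent random variables in $[0,1]$ with mean $n/2$, Hoeffding gives
$$\Pr[X - n/2 \ge t] \le \e^{-2t^2/n}$$
for any $t>0$. Taking $t=(c-1)\sqrt{n}$ yields $\Pr[X - n/2 \ge (c-1)\sqrt{n}] \le \e^{-2(c-1)^2}$, which immediately dominates the stated event. This part is one line and I expect no difficulty here.

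For the lower bound, my plan is to produce a pointwise estimate of $\Pr[X = n/2+k] = \binom{n}{n/2+k} 2^{-n}$ for integers $k$ in the window $[(c-1)\sqrt{n}, c\sqrt{n}]$ and then sum. First I would start from the identity
$$\frac{\binom{n}{n/2+k}}{\binom{n}{n/2}} \;=\; \prod_{j=0}^{k-1}\frac{n/2-j}{n/2+j+1},$$
and take logarithms. Using the elementary bounds $\log(1-x) \ge -x-x^2$ for $x \in [0,1/2]$ and $\log(1+x) \le x$ for $x \ge 0$, each log-factor is at least $-(4j+2)/n - 4j^2/n^2$. The hypothesis $n \ge 16c^2$ guarantees $2k/n \le 1/2$, so these estimates are valid for every $k \le c\sqrt{n}$. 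Summing the bound in $j$ and plugging $k \le c\sqrt{n}$ yields
$$\log\frac{\binom{n}{n/2+k}}{\binom{n}{n/2}} \;\ge\; -2c^2 - O(1),$$
where the $O(1)$ term is controlled explicitly by $n \ge 16c^2$. Combining this with the standard lower bound $\binom{n}{n/2}\,2^{-n} \ge 1/\sqrt{2n}$ (again Stirling or induction), I get a pointwise lower bound of the form $(\text{const}/\sqrt{n})\,\e^{-2c^2 - O(1)}$ valid for every integer $k$ in the window.

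Finally, I would observe that the interval $[(c-1)\sqrt{n}, c\sqrt{n}]$ contains at least $\lfloor \sqrt{n}\rfloor \ge \sqrt{n}/2$ integers (using $n\ge 16c^2 \ge 64$), and summing the pointwise bound over these integers cancels the $1/\sqrt{n}$ factor and produces an absolute constant times $\e^{-2c^2}$. Comparing with the target $\e^{-3c^2-4}$ leaves an exponentially large slack $\e^{c^2+O(1)}$, which easily absorbs all unspecified constants. The main obstacle I anticipate is purely bookkeeping: making sure all of the Stirling error terms and the terms of the form $k^3/n^2$ and $k/n$ stay below the $c^2+4$ of slack that the target inequality provides. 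The condition $n \ge 16c^2$ is precisely tuned to make this accounting go through (it forces $k/\sqrt{n} \le c$ and $k/n \le 1/4$ simultaneously), so once the arithmetic is done carefully no further ideas are needed.
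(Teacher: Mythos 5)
Your proposal is correct and follows essentially the same route as the paper's proof: the upper bound is the Chernoff--Hoeffding tail bound, and the lower bound compares $\binom{n}{n/2+k}$ to the central binomial coefficient via the telescoping product of ratios, invokes Stirling for $\binom{n}{n/2}2^{-n}\ge 1/\sqrt{2n}$, and sums over the roughly $\sqrt{n}$ integers in the window, with the $c^2+4$ slack in the exponent absorbing all the error terms. The only differences are bookkeeping (you bound logarithms of the factors where the paper bounds the product directly), and your exponent loss of about $\tfrac{7}{3}c^2$ is in fact slightly sharper than the paper's $3c^2+3$.
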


\begin{proofof}{Lemma~\ref{l-cycleub}}
To prove that $C^k \le  {2 n^2\over \ln k}$, let $c=\sqrt{\ln k}/2$ and $\ell=n^2/4(c-1)^2$.
If a single walk during a random $k$-walk of length $\ell$ on a cycle of length $n$ makes in total at least
$\ell/2+n/2$ steps to the right then it traversed around the whole cycle. Note, $n/2 = \sqrt{\ell}(c-1)$.
By the previous proposition, $\Pr[$a single walk makes at least $\ell/2+n/2$ steps to the right
during a random walk of length $\ell] \ge \e^{-3c^2-4}\ge 1/k$, for $k$ large enough. Hence, $k$ walks walking in parallel at random
for $\ell$ steps fail to cover the whole cycle of length $n$ with probability at most $(1-1/k)^k < 1/\e$.
Thus $C^k \le \sum_{i=0}^{\infty}{1\over \e^i} \ell = \e \ell / (\e-1) \le  2 n^2 /\ln k$, for $k$ large enough.
\end{proofof}

Lemma~\ref{l-24} also implies the following claim.

\begin{thm}
Let $G_{n,d}$ be a $d$-dimensional grid (torus) on $n^{1/d}\times n^{1/d}\times \cdots n^{1/d}$ vertices, $d\ge 2$.
For any $k$, $C^k(G_{n,d}) \ge \Omega(n^{2/d}  / \log k)$.
\end{thm}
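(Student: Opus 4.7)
The plan is to reduce to the cycle case already handled by Lemma~\ref{l-24} by projecting onto a single coordinate axis. Write $N = n^{1/d}$, so that $G_{n,d}$ is the torus $(\mathbb{Z}/N\mathbb{Z})^d$. For each of the $k$ walks on the torus, the first coordinate evolves as a \emph{lazy} random walk on the cycle $L_N$: at every step it moves $\pm 1$ with probability $1/(2d)$ each, and stays put with probability $(d-1)/d$. Since any $k$-walk that covers $G_{n,d}$ must in particular cover $L_N$ in projection, it is enough to lower bound the $k$-cover time of the lazy walks on $L_N$.

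I would then mimic the two key moves of Lemma~\ref{l-24} in this lazy setting. Assume toward the bound that $C^k(G_{n,d}) \le d N^2 / s$ and set $T = 2 d N^2 / s$. Fix any starting vertex $u$ and let $v$ be the torus vertex antipodal to $u$ along the first axis; by Markov's inequality, with probability at least $1/2$ some walk reaches $v$ by time $T$, in which case its first-coordinate displacement $S_t$ satisfies $\max_{t \le T} |S_t| \ge N/2$. The reflection/symmetry argument of Lemma~\ref{l-24} carries over verbatim, since after hitting $\pm N/2$ the projected walk is still symmetric between left and right (the lazy steps bias neither direction), so
\[
\Pr\!\left[\max_{t \le T} |S_t| \ge N/2\right] \le 2 \Pr\!\left[|S_T| \ge N/2\right].
\]
The increments of $S_T$ are mean-zero and lie in $[-1,1]$, so Hoeffding's inequality gives $\Pr[|S_T| \ge N/2] \le 2\exp(-s/(16d))$. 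A union bound over the $k$ walks yields $4k \exp(-s/(16d)) \ge 1/2$, whence $s \le 16 d \ln(8k)$ and
\[
C^k(G_{n,d}) \ge \frac{d N^2}{s} \ge \frac{N^2}{16 \ln(8k)} = \Omega\!\left(\frac{n^{2/d}}{\log k}\right),
\]
as required. I note that the factor $d$ in the Hoeffding exponent cancels exactly the factor $d$ that we built into the time scale $T$, so the final bound has no $d$-dependence beyond the implicit constant.

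The only obstacle I foresee is essentially a bookkeeping check: that nothing in Lemma~\ref{l-24}'s proof genuinely required the cycle walk to be non-lazy. The reflection step uses only symmetry between left and right, and Hoeffding uses only that the increments are bounded by $1$ and have mean zero; both transfer without modification, with laziness showing up merely as a reduced variance that Hoeffding happens to ignore. A minor caveat is the regime $\log k \gtrsim n^{1/d}$, in which the asserted bound is already implied by the trivial diameter estimate $C^k(G_{n,d}) = \Omega(n^{1/d})$, so one may freely restrict attention to $k$ not too large.
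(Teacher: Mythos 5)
Your proof is correct and follows essentially the same route as the paper: project each walk onto one coordinate axis to obtain lazy walks on a cycle of length $n^{1/d}$, observe that covering the torus forces covering the projected cycle, and apply the cycle lower bound of Lemma~\ref{l-24}. The only (harmless) difference is that the paper invokes Lemma~\ref{l-24} directly, dismissing the laziness with the remark that holding steps can only increase the cover time, whereas you re-run the reflection-plus-Hoeffding argument for the lazy walk explicitly.
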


\begin{proof}
We prove the claim for $d=2$. The other cases are analogous. Consider the random $k$-walk on a $\sqrt{n}\times \sqrt{n}$ grid (torus).
We can project the position of each of the $k$ walks to the $x$ axis. This will give a distribution identical to a $k$-walk
on a cycle of size $\sqrt{n}$ where in each step we make a step to the left with probability $1/4$, step to the right
with probability $1/4$ and with the remaining probability $1/2$ we stay at the current vertex. In order for a $k$-walk
to cover the whole grid, this projected walk must cover the whole cycle. Thus the expected cover time of the grid
must be lower-bounded by the expected cover time for a cycle of size $\sqrt{n}$ which is $\Omega(n/\log k)$ by Lemma~\ref{l-24}.
(Note the steps in which we stay at the same vertex can only increase the cover time.)
\end{proof}
\begin{cor}
For a $2$-dimensional grid $G_{n,2}$, $S^k(G_{n,2}) \le O(\log^2 n \log k)$.
\end{cor}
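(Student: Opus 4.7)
The plan is to read off the corollary as an immediate consequence of the preceding theorem combined with a known bound on the single-walk cover time of the two-dimensional torus. By definition of speed-up, $S^k(G_{n,2}) = C(G_{n,2})/C^k(G_{n,2})$, so it suffices to plug in an upper bound on the numerator and a lower bound on the denominator.

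First I would recall that for the $\sqrt n \times \sqrt n$ torus the cover time of a single random walk is known to be $C(G_{n,2}) = \Theta(n\log^2 n)$ (this value is stated in the results table at the beginning of the paper and is standard). Second, the theorem immediately preceding this corollary, applied with $d=2$, gives $C^k(G_{n,2}) \ge \Omega(n^{2/d}/\log k) = \Omega(n/\log k)$. Dividing these two bounds yields
\[
S^k(G_{n,2}) \;=\; \frac{C(G_{n,2})}{C^k(G_{n,2})} \;\le\; \frac{O(n\log^2 n)}{\Omega(n/\log k)} \;=\; O(\log^2 n \cdot \log k),
\]
which is exactly the claim.

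There is no real obstacle: the only content beyond a one-line calculation is ensuring that the projection argument in the preceding theorem has been stated for the range of $k$ one cares about here. Since that theorem is stated for arbitrary $k$, no additional hypothesis on $k$ is needed, and the corollary follows directly.
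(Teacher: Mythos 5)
Your derivation is correct and is exactly the argument the paper intends: the corollary is stated as an immediate consequence of the preceding theorem (which for $d=2$ gives $C^k(G_{n,2}) \ge \Omega(n/\log k)$) combined with the known single-walk cover time $C(G_{n,2}) = \Theta(n\log^2 n)$ of the two-dimensional torus. The paper gives no separate proof, and your one-line ratio computation is the intended one.
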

This corollary together with Theorem~\ref{s1-thit} implies Theorem~\ref{t-grid}.


\section{Exponential speed-up}
\label{s-barbell}

\infig{3}{barball}{Example barbell graph $B_{13}$, $v_c$ is the
center of the bar-ball}

\def\Pr{\mathop{\rm Pr}\nolimits}
On some graphs the speed-up can be exponential in $k$ for at least some choice of the starting point.
For an odd integer $n>1$, we define a barbell graph $B_n$ to be a
graph consisting of two cliques of size $(n-1)/2$ connected by a
path of length 2 (see Figure~\ref{fig:barball}). 
The vertex on that paths is called the {\em
center} of $B_n$ and the cliques are called {\em bells}. The
expected time to cover $B_n$ by a random walk is $\Theta(n^2)$ since
once the token is in one of the cliques it takes on average
$\Theta(n^2)$ steps to exit that clique. It can be shown that the
maximum cover time is attained by starting the random walk from the
center of $B_n$. We show the following theorem (see the appendix for the proof).

\begin{thm}\label{thm:t4}
Let $n>1$ be an odd integer, $v_{\rm c}$ be the center of $B_n$ and
$k=20 \ln n$. The expected cover time starting from $v_{\rm c}$
satisfies $C_{v_{\rm c}}^k = O(n)$.
\end{thm}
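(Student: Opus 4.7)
\noindent
The approach is a two-layer argument: first show that the $k$ walks distribute themselves between the two bells in a balanced way with high probability, then show that once enough walks are in a bell they cover it in $O(n)$ steps. Combined with the trivial worst-case bound $C_{v_{\rm c}} = O(n^2)$, this will yield $C_{v_{\rm c}}^k = O(n)$.

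First I would analyze the initial move. From $v_{\rm c}$ each walk independently enters one of the two gateway vertices with probability $1/2$, so by a Chernoff bound (with $\mu = 10 \ln n$ and deviation $\delta = 1/2$) each bell receives at least $\ell = 5 \ln n$ walks with probability at least $1 - n^{-1.25}$. I would condition on this event and work inside one bell, consisting of a clique $K$ of size $s = (n-1)/2$ with gateway $g$.

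The main step is to show that $\ell$ independent walks starting at $g$ cover $K$ within $T = cn$ steps (for a sufficiently large absolute constant $c$) with probability $1 - n^{-\Omega(1)}$. Fix a non-gateway vertex $v$ of $K$ and couple each barbell walk with an auxiliary \emph{clique walk} that stays inside $K$ by re-sampling uniformly among the clique neighbors of $g$ whenever the real walk would step to $v_{\rm c}$. On this clique walk, from every $u \ne v$ the probability of stepping to $v$ equals $1/(s-1)$, so the hitting time of $v$ is Geometric with parameter $1/(s-1)$ and the probability of not hitting $v$ in $T$ steps is at most $\e^{-T/(s-1)} \le \e^{-c/2}$. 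Meanwhile the two coupled walks agree until the barbell walk first exits $K$, and the probability of any exit within $T$ steps is $O(T/s^{2}) = O(1/n)$, since the walk is at $g$ with probability $O(1/s)$ per step and exits from $g$ with probability $1/s$. Choosing $c$ so that $\e^{-c/2} \le 1/20$ makes the per-walk miss probability at most $1/10$, and independence together with a union bound over the fewer than $n$ non-center vertices yields that both bells are covered within $T$ steps with probability $1 - n^{-\Omega(1)}$, because $(1/10)^{5 \ln n} = n^{-5\ln 10} \ll n^{-10}$.

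Combining the two failure bounds gives $\Pr[\tau > T] = n^{-\Omega(1)}$, and since the worst-case cover time of $B_n$ by a single walk is $O(n^2)$ (hence an upper bound for $k$ walks as well), I would conclude $C_{v_{\rm c}}^k \le T + \Pr[\tau > T] \cdot O(n^2) = O(n)$. The main obstacle is justifying the estimate that the walk is at $g$ with probability $O(1/s)$ per step: in the first $O(\log s)$ steps the walk has not mixed within $K$ and is concentrated near $g$, but the expected number of returns to $g$ during this initial window is $O(1)$, contributing only an extra $O(1/s) = O(1/n)$ to the exit probability, which can be absorbed into the error term. Everything else is a fairly routine Chernoff-plus-coupling calculation.
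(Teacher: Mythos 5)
Your proposal is correct and follows the same three-stage outline as the paper's proof: a Chernoff bound on the first step to guarantee each bell receives $\Theta(\log n)$ tokens, an argument that the tokens effectively stay confined to their bells for $O(n)$ steps, a fast-cover argument for each clique by the $\Theta(\log n)$ walks it received, and finally the worst-case single-walk bound $O(n^2)$ multiplied by a polynomially small failure probability to control the expectation. The one place where you genuinely diverge is the middle step. The paper bounds the probability that more than $2\ln n$ tokens return to the center within the first $10n$ steps, and then treats the surviving tokens in each bell as walks on a clique, invoking its coupon-collector lemma (Lemma~\ref{lem:clique}) for the cover time of $K_m$ by $2\ln n$ walks. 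You instead couple each barbell walk with an auxiliary walk confined to the clique (re-sampling the rare step toward $v_{\rm c}$), bound the decoupling probability by $O(T/s^2)=O(1/n)$, and use the exact geometric hitting probability $1/(s-1)$ per step together with a union bound over vertices. Your coupling is arguably the cleaner route: it sidesteps the fact that conditioning on ``no token returns to the center'' perturbs the law of the walk inside the clique, a point the paper's proof passes over silently, and your treatment of the unmixed initial window (expected $1+O(T/s)$ visits to the gateway, each contributing exit probability $1/s$) is exactly right. The only nitpick is that the final inequality $C_{v_{\rm c}}^k \le T + \Pr[\tau>T]\cdot O(n^2)$ deserves one sentence of justification (on the event $\tau>T$, the residual expected cover time from the configuration at time $T$ is at most $\max_u C_u = O(n^2)$, e.g., by following a single one of the $k$ tokens), but this is the same step the paper takes.
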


Hence, the speed-up in a cover time starting from a particular
vertex of a $k$-random walk
 compared to a random walk by a single token may be substantially larger than $k$.
In the case of $B_n$ the speed-up is $\Omega(n)$ for $O(\log n)$-walks for walks starting
at a particular vertex.

\section{Conclusions and Open Problems}
In this paper,
we have shown that many random walks can be faster than one,
sometimes much faster.
Our main result is that a linear speed-up is possible on
a large class of interesting graphs---including
complete graphs, expanders, grids, hypercubes, balanced trees, and
random graphs---in the sense that $k \leq \log n$ random walks can
cover an $n$-node graph $k$ times faster than a single random walk.
In the case of expanders, we obtain a linear speed-up even when $k$ is as large
as $n$.
Our technique is to relate the expected cover time for $k$ random walks
to the expected cover and hitting times for a single random walk;
and to observe that if there is a large gap between the single-walk
cover and hitting times,
then a linear speed-up is possible using multiple random walks.
Using a different technique,
we were able to bound the $k$-walk cover time in terms
of the mixing time as well.

Open problems abound, despite of the progress reported here.
There are the standard questions concerning improving bounds.
Is it possible that the speed-up is always at most $k$?
Our single counter example was that multiple random walks starting at the
center of the barbell achieved an exponential speed-up, but perhaps the
speed-up is limited to $k$ if we start at other nodes.
Is it possible that the speed-up is always at least $\log k$?
We have shown that the speed-up is $\log k$ on the ring,
and we conjecture this is possible on any graph.

Another source of open problems is to consider more general classes of graphs.
Said in another way,
our approach has been to relate the $k$-walk cover time to the single-walk
hitting time and mixing time,
but is there another property of a graph that more crisply characterizes
the speed-up achieved by multiple random walks?

\bibliographystyle{acm}
\bibliography{spaa}

\begin{thebibliography}{10}

\bibitem{alanyali06a-random-walk}
{\sc Alanyali, M., Saligrama, V., and Sava, O.}
\newblock A random-walk model for distributed computation in energy-limited
  network.
\newblock In {\em In Proc. of 1st Workshop on Information Theory and its
  Application\/} (San Diego, 2006).

\bibitem{aldos83on}
{\sc Aldous, D.~J.}
\newblock On the time taken by random on finite groups to visit every state.
\newblock {\em Z. Wahrsch. Verw. Gebiete 62}, 3 (1983), 361--374.

\bibitem{aldous89lower}
{\sc Aldous, D.~J.}
\newblock Lower bounds for covering times for reversible {m}arkov chains and
  random walks on graphs.
\newblock {\em J. Theoret. Probab. 2}, 1 (1989), 91--100.

\bibitem{aldous91threshold}
{\sc Aldous, D.~J.}
\newblock Threshold limits for cover times.
\newblock {\em Journal of Theoretical Probability V4}, 1 (1991), 197--211.

\bibitem{aleliunas79random}
{\sc Aleliunas, R., Karp, R.~M., Lipton, R.~J., Lov{\'a}sz, L., and Rackoff,
  C.}
\newblock Random walks, universal traversal sequences, and the complexity of
  maze problems.
\newblock In {\em 20th Annual Symposium on Foundations of Computer Science (San
  Juan, Puerto Rico, 1979)}. IEEE, New York, 1979, pp.~218--223.

\bibitem{alon86eigen}
{\sc Alon, N.}
\newblock Eigenvalues and expanders.
\newblock {\em Combinatorica 6}, 2 (1986), 83--96.

\bibitem{ATWZ}
{\sc Armoni, R., Ta-Shma, A., Wigderson, A., and Zhou, S.}
\newblock A $(\log n )^{4/3}$ space algorithm for $(s,t)$ connectivity in
  undirected graphs.
\newblock {\em Journal of the ACM 47}, 2 (2000), 294--311.

\bibitem{avin04efficient}
{\sc Avin, C., and Brito, C.}
\newblock Efficient and robust query processing in dynamic environments using
  random walk techniques.
\newblock In {\em Proc. of the third international symposium on Information
  processing in sensor networks\/} (2004), pp.~277--286.

\bibitem{avin05on-the-cover}
{\sc Avin, C., and Ercal, G.}
\newblock On the cover time of random geometric graphs.
\newblock In {\em Proc. Automata, Languages and Programming, 32nd International
  Colloquium, ICALP05\/} (2005), pp.~677--689.

\bibitem{bar-yossef06rawms}
{\sc Bar-Yossef, Z., Friedman, R., and Kliot, G.}
\newblock Rawms -: random walk based lightweight membership service for
  wireless ad hoc network.
\newblock In {\em MobiHoc '06: Proceedings of the seventh ACM international
  symposium on Mobile ad hoc networking and computing\/} (New York, NY, USA,
  2006), ACM Press, pp.~238--249.

\bibitem{barnes-feige}
{\sc Barnes, G., and Feige, U.}
\newblock A spectrum of time-space tradeoffs for undirected s-t connectivity.
\newblock {\em Journal of Computer and System Sciences 2}, 54 (1997), 305--316.

\bibitem{braginsky02rumor}
{\sc Braginsky, D., and Estrin, D.}
\newblock Rumor routing algorthim for sensor networks.
\newblock In {\em Proc. of the 1st ACM Int. workshop on Wireless sensor
  networks and applications\/} (2002), ACM Press, pp.~22--31.

\bibitem{broder89cover}
{\sc Broder, A., and Karlin, A.}
\newblock Bounds on the cover time.
\newblock {\em J. Theoret. Probab. 2\/} (1989), 101--120.

\bibitem{BKRU}
{\sc Broder, A., Karlin, A., Raghavan, P., and Upfal, E.}
\newblock Trading space for time in undirected s-t connectivity.
\newblock In {\em Proc. ACM Symp. Theory of Computing\/} (1989), pp.~543--549.

\bibitem{chandra89electrical}
{\sc Chandra, A.~K., Raghavan, P., Ruzzo, W.~L., and Smolensky, R.}
\newblock The electrical resistance of a graph captures its commute and cover
  times.
\newblock In {\em Proc. of the twenty-first annual ACM symposium on Theory of
  computing\/} (1989), ACM Press, pp.~574--586.

\bibitem{cooper03cover}
{\sc Cooper, C., and Frieze, A.}
\newblock The cover time of sparse random graphs.
\newblock In {\em Proceedings of the fourteenth Annual ACM-SIAM Symposium on
  Discrete Algorithms (SODA-03)\/} (Baltimore, Maryland, USA, 2003), ACM Press,
  pp.~140--147.

\bibitem{dolev02random}
{\sc Dolev, S., Schiller, E., and Welch, J.}
\newblock Random walk for self-stabilizing group communication in ad-hoc
  networks.
\newblock In {\em Proceedings of the 21st IEEE Symposium on Reliable
  Distributed Systems (SRDS'02)\/} (2002), IEEE Computer Society, p.~70.

\bibitem{feige95lower}
{\sc Feige, U.}
\newblock A tight lower bound on the cover time for random walks on graphs.
\newblock {\em Random Structures and Algorithms 6}, 4 (1995), 433--438.

\bibitem{feige95upper}
{\sc Feige, U.}
\newblock A tight upper bound on the cover time for random walks on graphs.
\newblock {\em Random Structures and Algorithms 6}, 1 (1995), 51--54.

\bibitem{feige}
{\sc Feige, U.}
\newblock Short random walks on graphs.
\newblock {\em SIAM Journal on Discrete Mathematics 1}, 9 (1996), 19--28.

\bibitem{gkants04peer}
{\sc Gkantsidis, C., Mihail, M., and Saberi, A.}
\newblock Random walks in peer-to-peer networks.
\newblock In {\em in Proc. 23 Annual Joint Conference of the IEEE Computer and
  Communications Societies (INFOCOM). to appear\/} (2004).

\bibitem{HZ}
{\sc Halperin, S., and Zwick, U.}
\newblock An optimal randomized logarithmic time connectivity algorithm for the
  {EREW} {PRAM}.
\newblock {\em Journal of Computer and System Sciences 53\/} (1996), 395--416.

\bibitem{jerrum97markov}
{\sc Jerrum, M., and Sinclair, A.}
\newblock The markov chain monte carlo method: An approach to approximate
  counting and integration.
\newblock In {\em Approximations for NP-hard Problems, Dorit Hochbaum ed.} PWS
  Publishing, Boston, MA, 1997, pp.~482--520.

\bibitem{jonasson98on}
{\sc Jonasson, J.}
\newblock On the cover time for random walks on random graphs.
\newblock {\em Comb. Probab. Comput. 7}, 3 (1998), 265--279.

\bibitem{jonasson00planar}
{\sc Jonasson, J., and Schramm, O.}
\newblock On the cover time of planar graphs.
\newblock {\em Electronic Communications in Probability 5\/} (2000), 85--90.

\bibitem{KNP}
{\sc Karger, D.~R., Nisan, N., and Parnas, M.}
\newblock Fast connected components algorithms for the {EREW} {PRAM}.
\newblock {\em SIAM J. Comput. 28}, 3 (1999), 1021--1034.

\bibitem{lovasz93survey}
{\sc Lov{\'a}sz, L.}
\newblock Random walks on graphs: A survey.
\newblock In {\em Combinatorics, Paul Erd\H os is eighty, Vol.\ 2 (Keszthely,
  1993)}, vol.~2 of {\em Bolyai Soc. Math. Stud.} J\'anos Bolyai Math. Soc.,
  Budapest, 1996, pp.~353--397.

\bibitem{matthews88coveringspheres}
{\sc Matthews, P.}
\newblock Covering problems for {b}rownian motion on spheres.
\newblock {\em Ann. Probab. 16}, 1 (1988), 189--199.

\bibitem{NSW}
{\sc Nisan, N., Szemer{\'e}di, E., and Wigderson, A.}
\newblock Undirected connectivity in {$O(\log^{1.5} n)$} space.
\newblock In {\em Proceedings of the 33rd Annual Symposium on Foundations of
  Computer Science\/} (1992), pp.~24--29.

\bibitem{sadagoan03active}
{\sc Sadagopan, N., Krishnamachari, B., and Helmy, A.}
\newblock Active query forwarding in sensor networks (acquire).
\newblock {\em Journal of Ad Hoc Networks 3}, 1 (January 2005), 91--113.

\bibitem{sergio02constrained}
{\sc Servetto, S.~D., and Barrenechea, G.}
\newblock Constrained random walks on random graphs: Routing algorithms for
  large scale wireless sensor networks.
\newblock In {\em Proc. of the first ACM Int. workshop on Wireless sensor
  networks and applications\/} (2002), ACM Press, pp.~12--21.

\bibitem{wagner98robotic}
{\sc Wagner, I.~A., Lindenbaum, M., and Bruckstein, A.~M.}
\newblock Robotic exploration, brownian motion and electrical resistance.
\newblock {\em Lecture Notes in Computer Science 1518\/} (1998), 116--130.

\bibitem{Zuckerman:1989lr}
{\sc Zuckerman, D.}
\newblock Covering times of random walks on bounded degree trees and other
  graphs.
\newblock {\em Journal of Theoretical Probability V2}, 1 (1989), 147--157.

\bibitem{zuckerman90lower}
{\sc Zuckerman, D.}
\newblock A technique for lower bounding the cover time.
\newblock In {\em Proc. of the twenty-second annual ACM symposium on Theory of
  computing\/} (1990), ACM Press, pp.~254--259.

\end{thebibliography}
\appendix

\section{Proofs}

\begin{proofof}{Lemma \ref{l-composition}}
The proof is conceptually simple. We introduce here a little bit of notation to describe it formally.
For a sequence of vertices $\vec{c}=(c_0,c_1,\dots,c_t)$ and a random walk $X$ on $G$ starting from $c_0$,
$\vec{c} \sqsubseteq X$ denotes the event $\bigwedge_{i=0}^{t} X(i)=c_i$. For two sequences $\vec{c}=(c_0,\dots,c_t)$
and $\vec{d}=(d_0,\dots,d_{t'})$, where $c_t=d_0$ we denote by $\vec{c} \circ \vec{d} = (c_0,\dots,c_t,d_1,\dots, d_{t'})$.
It is straightforward to verify, if $X$ is a random walk starting from $c_0$ and
$Y$ is an independent random walk starting from $d_0$,
then $\Pr[ \vec{c} \sqsubseteq X \;\&\; \vec{d} \sqsubseteq Y] = \Pr[\vec{c} \circ \vec{d} \sqsubseteq X]$.
Last, for an integer $m \ge 1$ and a sequence $\vec{c}=(c_0,c_1,\dots,c_{km-1})$, $\vec{c}_{k,i}$ denotes
the subsequence $(c_{(i-1)m},\dots,c_{im-1})$ for $0 \le i \le k$.

WLOG $T_c$ is divisible by $k$.
Clearly, the probability that a random $k$-walk $(X_1,\dots,X_k)$ of length $T_c/k + \ell T_h$ on $G$
starting from vertices $u_1,\dots,u_k$ covers all of $G$ can be lower-bounded by
$$p = \Pr\left[\bigvee_{\vec{c}, \vec{h}_2,\dots,\vec{h}_k} \vec{c}_{k,1} \sqsubseteq X_1 \;\&\;
\vec{h}_2 \circ \vec{c}_{k,2} \sqsubseteq X_2 \;\&\; \cdots
\vec{h}_k \circ \vec{c}_{k,k} \sqsubseteq X_k \right],$$
where $\vec{c}$ is taken from the set of all sequences of vertices from $G$ corresponding to
walks of length $T_c$ on $G$ that start in $u_1$ and cover whole $G$, and $\vec{h}_i$ is taken
from the set of all sequences of vertices from $G$ corresponding to walks of length at most $\ell T_h$
that start in $u_i$ and hit $c_{(i-1)T_c/k}$ for the first time only at their end.
%
%
%
%
%
It is easy to verify that all the events in the union are disjoint. Hence,
\begin{eqnarray*}
p &=& \sum_{\vec{c}, \vec{h}_2,\dots, \vec{h}_k}  \Pr\left[\vec{c}_{k,1} \sqsubseteq X_1 \;\&\;
\vec{h}_2 \circ \vec{c}_{k,2} \sqsubseteq X_2 \;\&\; \cdots
\vec{h}_k \circ \vec{c}_{k,k} \sqsubseteq X_k \right] \cr
  &=& \sum_{\vec{c}, \vec{h}_2,\dots, \vec{h}_k}  \Pr\left[\vec{c} \sqsubseteq X_1 \;\&\;
\vec{h}_2 \sqsubseteq X_2 \;\&\; \cdots
\vec{h}_k \sqsubseteq X_k \right] \cr
  &=& \sum_{\vec{c}, \vec{h}_2,\dots,\vec{h}_k}  \Pr[\vec{c} \sqsubseteq X_1 ] \cdot \Pr[\vec{h}_2 \sqsubseteq X_2] \cdots
\Pr[\vec{h}_k \sqsubseteq X_k ] \cr
  &=& \sum_{\vec{c}} \Pr[\vec{c} \sqsubseteq X_1] \cdot \sum_{\vec{h}_2} \Pr[\vec{h}_2 \sqsubseteq X_2] \cdots \sum_{\vec{h}_k} \Pr[\vec{h}_k \sqsubseteq X_k],
\end{eqnarray*}
where the third equality follows from the independence of the walks.
By our assumption $\sum_{\vec{c}} \Pr[\vec{c} \sqsubseteq X_1] \ge p_c$. Since
$(1-a)(1-b)\ge (1-a-b)$ for $a,b\le 1$, to conclude the lemma it
suffices to argue that $\sum_{\vec{h}_i} \Pr[\vec{h}_i \sqsubseteq X_i] \ge 1-
(1-p_h)^\ell$ for all $i$. Notice that $\sum_{\vec{h}_i} \Pr[\vec{h}_i \sqsubseteq X_i]=\Pr[$ a random
walk of length $\ell T_h$ starting from $u_i$ visits $c_{(i-1)T_c
/k}]$. Since a random walk of length $T_h$ fails to visit
$c_{(i-1)T_c /k}$ with probability at most $1-p_h$ regardless of its
starting vertex, a random walk of length $\ell T_h$ fails to visit
$c_{(i-1)T_c/k}$ with probability at most $(1-p_h)^\ell$. The lemma
follows.
\end{proofof}

\bigskip

\begin{proofof}{Proposition \ref{prp:folk}}
The upper bound follows from Chernoff bound.
The lower bound can be derived as follows. $\Pr[(c-1) \sqrt{n} \le X - n/2 \le c\sqrt{n}] = \sum_{k=(c-1)\sqrt{n}}^{c\sqrt{n}} \Pr[X-n/2=k]$.
For any $k$, $\Pr[X-n/2=k]= {n \choose n/2+k} / 2^{n}$.
We will compare ${n \choose n/2 + k}$ with the central binomial coefficient ${n\choose n/2}$.
\begin{eqnarray*}
{{n \choose n/2} \over {n \choose {n/2 + c\sqrt{n}}}} &=& \Pi_{j=1}^{n/2} {(n-j+1) \over j} \cdot \Pi_{j=1}^{n/2 + c\sqrt{n}} {j \over (n-j+1)}\cr
&=& \Pi_{j=n/2+1}^{n/2 + c\sqrt{n}} {j \over (n-j+1)}\cr
&=& \Pi_{j=1}^{c\sqrt{n}} {1+ {2\over n}j \over (1-{2\over n}(j+1))}.
\end{eqnarray*}

We upper-bound this ratio as follows:
\begin{eqnarray*}
\Pi_{j=1}^{c\sqrt{n}} ({1+ {2\over n}j }) &\le& \e^{{2 \over n}\sum_{j=1}^{c\sqrt{n}} j }\cr
&=& \e^{{2\over n}\cdot {{c\sqrt{n} (c\sqrt{n}+1)}\over 2}  }\cr
&\le& \e^{c^2 + 1}.
\end{eqnarray*}
%
Now, for $0\le x \le 1/2$, $\e^{-2x} \le 1-x$. Hence,
\begin{eqnarray*}
\Pi_{j=1}^{c\sqrt{n}} (1-{2\over n}(j+1)) &\ge& \e^{-{{4 \over n}\sum_{j=1}^{c\sqrt{n}} (j + 1) }} \cr
&\ge& \e^{-{4\over n}\cdot {{(c\sqrt{n}+1) (c\sqrt{n}+2)}\over 2}  }\cr
&\ge& \e^{-2c^2 - 2}.
\end{eqnarray*}

Thus $${{n \choose n/2} \over {n \choose {n/2 + c\sqrt{n}}}} \le \e^{3c^2+3}.$$
Using estimates on Stirling's formula ${n \choose n/2}\ge \sqrt{\frac{2}{\e \pi n}}\cdot 2^{n}$, we
conclude that
$$\sum_{k=(c-1)\sqrt{n}}^{c\sqrt{n}} {n \choose n/2 + k} \ge  {n \choose n/2} \sqrt{n} \e^{-3c^2-3} \ge \e^{-3c^2-4} \cdot 2^{n}.$$
The lemma follows.
\end{proofof}

\bigskip

\begin{proofof}{Theorem \ref{thm:t4}}
With high probability none of the following three events happens:
\begin{enumerate}
\item[$\mathcal{E}1$] In one of the bells there are less than $4\ln n$ tokens after the first step.
\item[$\mathcal{E}2$] During the first $10n$ steps of the random $k$-walk at least ${2} \ln n$
vertices return to the center.
\item[$\mathcal{E}3$] One of the bells is not covered within the first $10n$ steps.
\end{enumerate}

If none of the above events happens then each of the bells is
explored by at least $2\ln n$ tokens. Two disjoint cliques of size
$m=(n-1)/2$ are each covered by a random $2 \ln n$-walk in expected
time $2 C^{2\ln n}(K_m)$, by Lemma~\ref{lem:clique}. So if $C$
is the expected cover time of $B_n$ by a random $1$-walk then:
$$
C_{v_{\rm c}}^k \le                                       
2 C^{2\ln n}(K_m) + \Pr[(1)]C + \Pr[\mathcal{E}2\cup\mathcal{E}3](10n+C).
$$

We need to estimate the probabilities of the above events. By
Chernoff bound, $$\Pr[\mathcal{E}1] \le 2e^{-(16 \ln n)^2/2 \cdot 20 \ln n} <
1/n^5$$ for $n$ large enough. A single token returns to the center of
$B_n$ within $10n$ steps with probability at most ${1\over n} + { 10
n\over m(m+1)} < {22\over m}$. The probability that at least $2\ln
n$ vertices return to the center is then $<2^{20 \ln n}\cdot
(22/m)^{2\ln n} < {1/n^5}$, for $n$ large enough. Finally, the
probability that a random $2\ln n$-walk does not cover a clique of
size $m$ in $10n$ steps is at most $m(1-{1\over m})^{20n \ln n}\le m
e^{-10 \ln n}<1/n^5$. Now since $C = O(n^2)$ and $C^{2\ln n}(K_m) = O(n)$,
we get $C_{v_{\rm c}}^k = O(n)$.
\end{proofof}

\bigskip

\begin{proofof}{Lemma \ref{lem:clique}}
In the lemma we restrict $k$ to be less than $n$ to avoid rounding problems and for simplicity
we also assume self loops in the clique.
We will prove this using a coupon collector argument. Let $C$ be the number of purchases needed to collect $n$ different coupons.
Consider the case where a fair mom decides to help her $k$ kids to collect the coupons. Each time she buys a cereal  and gets a coupon she
gives it to the next-in-turn son in a round-robin fashion (i.e. kid $i \text{ mod } k$ gets the coupon from step $i$). Clearly, in expectation, after
$C$ visits to the grocery store mom got all the different coupons. Note that each child had his own independent coupon collecting process, and each have the same number
of coupons (plus-minus one).
\end{proofof}







\end{document}